\newcommand{\dft}{\xi}
\newcommand{\cu}{\mathrm i}
\newcommand{\comment}[1]{}
\begin{document}
\DeclareGraphicsExtensions{.pdf}
\graphicspath{{./}}
\theoremstyle{definition}
\newtheorem{theorem}{Theorem}
\newtheorem{lemma}{Lemma}
\newtheorem{remark}{Remark}
\newtheorem{definition}{Definition}

\title{Binary Discrete Fourier Transform and its Inversion}

\author{Howard~W.~Levinson and Vadim~A.~Markel%
\thanks{H.W.~Levinson is with Department of Mathematics and Computer Science, Santa Clara University, Santa Clara, CA, USA (e-mail: hlevinson@scu.edu)} 
\thanks{V.A.~Markel is with Department of Radiology, University of Pennsylvania, Philadelphia, PA, USA (e-mail: vmarkel@upenn.edu)}
}

\maketitle

\begin{abstract}
A binary vector has elements that are either $0$ or $1$. We investigate whether and how a binary vector of known length $N$ can be reconstructed from a limited set of its discrete Fourier transform (DFT) coefficients. {\em A priori} information that the vector is binary provides a powerful constraint. We prove that a binary vector is uniquely defined by its first two DFT coefficients (zeroth, which gives the popcount, and first) if $N$ is prime. If $N$ has two prime factors, additional DFT coefficients must be included in the data set to guarantee uniqueness, and we find the number of required coefficients in this case theoretically. In the presence of noise, one may need to know even more DFT coefficients to guarantee stable and unique inverse solution. We say that inversion is stable if the solution does not depend on the input strongly, so that a small change in the data (i.e., due to noise or loss of precision) does not result in a dramatically different inverse solution. Since the inverse solutions that we seek are discrete, stability in the context of this paper implies that small changes in the data do not change the inverse solution at all. Our results indicate that stable inversion can be obtained when the number of known coefficients is about $1/3$ of the total. This entails the effect of super-resolution (the resolution limit is improved by the factor of $\sim 3$). Two algorithms for solving the inverse problem numerically are proposed and tested. The first algorithm is combinatorial and suitable for problems with $N \lesssim 60$. Although the problem in general is NP-hard, the required computational complexity in a typical application of the algorithm is much smaller than that of exhaustive search. The second algorithm is based on optimization of a non-convex continuous functional with random jumps out of local minima, and its computational complexity is algebraic, although the number of required iterations is difficult to estimate. This latter method is applicable to much larger values of $N$, as we demonstrate numerically for $N=199$.
\end{abstract}

\begin{IEEEkeywords}
Discrete Fourier transform, super-resolution, binary vector
\end{IEEEkeywords}

\section{Introduction}
\label{sec:intro}

Super-resolution in imaging and signal processing is a subject of significant current importance. Fundamentally, the resolution limit is related to experimental inaccessibility of the spatial Fourier harmonics of an object beyond some band limit~\cite{maznev_2017_1}. The resulting image is therefore a low-path filtered version of the object. The classical Abbe limit on the resolution of optical systems is of this nature~\cite[Sec.~13.1.2]{born_book_99}. There exist many more examples of imaging or signal reconstruction problems in which Fourier components of a signal beyond some band limit are lost, corrupted or inaccessible. 

One of the most powerful approaches for achieving super-resolution, as well as for image denoising, is based on utilization of prior information~\cite{nasrollahi_2014_1, romano_2017_1}. The latter can be introduced explicitly as a probability density in the Bayesian framework~\cite{watzenig_2007_1}, or implicitly as a regularizing term in a cost function~\cite{engl_2005_1}. In addition to the classical Tikhonov regularization (ridge regression), the developed techniques include nonlinear interpolation~\cite{rajan_2001_1, jiji_2006_1}, Laplacian~\cite{lagendijk_1990_1, liu_2014_1}, total variation~\cite{rudin_1992_1, beck_2009_1}, sparsity-based methods~\cite{tropp_2007_1, blumensath_2012_1, blumensath_2013_1}, and many variants of the above.

Another promising regularization technique is based on the so-called compositional constraints or the $p$-species model. It has been used in a variety of settings including microscopy~\cite{deutsch_2015_1,pfitser_2000_1}, MRI~\cite{liang_2007_1}, diffuse optical tomography~\cite{corlu_2003_1,corlu_2005_1} and electromagnetic tomography~\cite{zhang-ting_2016_1}. The approach relies on the {\em a priori} knowledge that the sample consists of two or more known ``species'', that is, materials whose properties are known. The spatial distribution of the components is however unknown and must be found by solving an inverse problem. In the context of signal processing, the $p$-species model implies that a discrete signal can take only $p$ known values.

In this paper, we examine what is perhaps the most simple and at the same time the most fundamental mathematical question of the $p$-species model. Assume that there are only two species, i.e., the signal can take only two distinct values. How many Fourier coefficients one needs to know to reconstruct the signal precisely? More specifically, let the discrete Fourier transform (DFT) coefficients be labeled by $m$ where $m \in [-M,M]$ and $2M+1=N$ is the total number of the signal samples. We also assume that the signal can take two distinct known values, say, $a$ and $b$. Can we reconstruct the signal precisely if we know the DFT coefficients only within the band limit $m \in [-L,L]$ where $L < M$? What is the smallest value of $L$ for which reconstruction is still possible?

Below, we prove that the inverse problem of reconstructing a binary signal from the band-limited DFT data has a unique solution if $L=1$ and $N$ is prime. Moreover, we derive the minimum value of $L$ that guarantees uniqueness if $N$ has two prime factors. We also discuss stability and computational efficiency of inversion. Thus, in the case of a large prime $N$, $L=1$ is theoretically sufficient to guarantee uniqueness of the inverse solution. However, finding this solution numerically can be difficult or impractical due to high computational complexity and low noise tolerance. The difficulty can be mitigated by including additional DFT coefficients into the data set. As $L$ is increased, stability of the inverse problem is improved and computational complexity is reduced. By stability we mean here a lack of sensitivity of inverse solutions to small changes in the given DFT coefficients. Thus, if inversion is stable, we expect to recover the exact binary vector even if the DFT coefficients on input are imprecise or corrupted by noise. In Supplemental Material, we provide a computational package based on the the inversion algorithms described below, and a set of examples in which forward data are rounded off to 8 and 4 significant figures. In all cases we have considered (with both prime and non-prime $N$), reconstruction becomes computationally efficient and stable  when $L \gtrsim M/3$. In this case, the data can be further rounded off to 3 or even 2 significant figures without affecting the inverse solution. This corresponds to achieving a three-fold improvement of the resolution limit.

The related previous theoretical work includes investigation of discrete Fourier sampling and recovery with both random~\cite{tropp_2008_1} and deterministic~\cite{moitra_2015_1, bailey_2012_1} sampling. Ref.~\cite{tao_2003_1} introduced an uncertainty principle for prime order cyclic groups, which is related to our uniqueness results. Also relevant are the algebraic ideas that arise in error correcting for channel coding~\cite{ryan_2009_1, oggier_2005_1} and in the investigation of the vanishing sums of the roots of unity~\cite{mann_1965_1, conway_1976_1, lenstra_1978_1, lam_1996_1, lam_2000_1}.

On the inversion and algorithmic side, we are interested in reconstructing the signal precisely and not approximately, and the form of the signal is assumed to be general. In particular, the signal may be more complicated than one or two compact pulses. Under the circumstances, application of the level-set methods~\cite{aravkin_2018_1} is impractical. Also, binarity is related to but not identical to sparsity. A sparse vector has many of its elements equal to zero but the rest can take any (unknown) values. The binary vectors considered here can have about one half of their elements equal to 0 (in the most difficult case) but the rest have the same (known) value of 1. Therefore, we did not investigate sparsity-based algorithms. Further, the inverse problem considered by us can be rephrased as a linear integer programming problem of the form $A{\tt x} = \tilde{\upvarphi}$, where $\tilde{\upvarphi}$ (defined in Section~\ref{sec:inv} below) contains the known DFT coefficients, $A$ is the relevant Fourier matrix, and ${\tt x}$ is the unknown binary vector. As there is no further objective function (just the equality constraint), using branch-and-bound techniques do not offer significant improvements as there is no optimization quantity to bound. The combinatorial algorithm described below is, in fact, an efficient implementation of the branching method in which we sequentially search the subsets of binary vectors separated from an initial guess by a monotonously-increasing distance. Without an objective function, bounding improvements must come from analyzing the feasibility of solutions within a branch. In some cases, one can apply cutting planes methods to restrict the size of the search space~\cite{balas_2003_1}.  As we have verified, a direct application of cutting planes to our problem has a marginal effect, but devising more sophisticated definitions of cutting planes is a natural area for improvement. The problem we are considering is also closely related to the 0-1 knapsack problem and its reduction to the subset sum problem~\cite{martello_2000_1}, which is known to be NP-hard~\cite{karp_1972_1}.  There are also many refined approaches for solving linear binary integer programming problems, including dynamic programming~\cite{koiliaris_2019_1, bringmann_2017_1} and probabilistic and approximate solutions~\cite{chan_2018_1, boutsidis_2009_1, howgrave_2010_1}. As a benchmark, we have solved the test inverse problems (included as examples in the computational package) using the standard linear integer programming techniques via MATLAB's mixed-integer linear programming function {\tt intlinprog}. This function was able to recover the three model vectors defined below, but required longer running time than the codes provided in the computational package.

The rest of this paper is organized as follows. In Section~\ref{sec:bin} we introduce the binary DFT, discuss its basic properties and show that it is sufficient to consider the vectors whose elements take only two known values, $0$ and $1$. In Section~\ref{sec:num}, we adduce several numerical examples that illustrate the theoretical possibility of reconstructing a binary vector from band-limited DFT data. In Section~\ref{sec:uni}, we prove two key uniqueness theorems. In Section~\ref{sec:sta} we discuss stability and in Section~\ref{sec:inv} two algorithms for numerical inversion are introduced and tested. Section~\ref{sec:disc} contains a discussion and conclusions. Supplemental Material contains the complete data set that can be used to reproduce all figures of this paper and a computational package with a detailed user guide and a set of examples. The package implements the inversion methods of Section~\ref{sec:inv}.

Below, the following acronyms are used: ``gcd'' is ``greatest common divisor'' and the symbol ``mod'' is used to denote modulus congruence, that is, $n \equiv m \pmod k$ if $n-m$ is divisible by $k$. 

\section{Binary DFT}
\label{sec:bin}

Let ${\tt v} = (v_1,v_2 \ldots , v_N)$ be a vector of length $N>1$. For simplicity, we assume that $N$ is odd. The DFT of ${\tt v}$ is given by
\begin{subequations}
\label{DFT}
\begin{align}
\label{DFT_for}
\tilde{v}_m = \sum_{n=1}^N v_n e^{\cu \dft m n } \ ,
\end{align}
where
\begin{align}
\label{xi_def}
\dft = \frac{2\pi}{N} \ , \ \ -M \leq m \leq M \ , \ \ M=\frac{N-1}{2} \ .
\end{align}
Note that $\tilde{v}_m$ is defined for any integer $m$ and is periodic so that $\tilde{v}_{m+N} = \tilde{v}_m$. It is therefore sufficient to restrict $m$ to the interval $[-M, M]$. Assuming that we know all Fourier coefficients with indexes in this interval, we can reconstruct ${\tt v}$ by the inverse DFT according to
\begin{align}
\label{DFT_inv}
v_n = \frac{1}{N}\sum_{m=-M}^M \tilde{v}_m e^{- \cu \dft n m } \ , \ \ 1\leq n \leq N  \ .
\end{align}
\end{subequations}
We will refer to ${\tt v}$ and $\tilde{\tt v} = (\tilde{v}_{-M}, \tilde{v}_{-M+1}, \ldots \tilde{v}_M)$ as to the real-space and Fourier-space vectors.

The questions we wish to address are the following. Assume that we know only some of the Fourier coefficients $\tilde{v}_m$, namely, those with the indexes bounded as $|m| \leq L \leq M$, and also that $v_n$ can take only two distinct, {\em a priori} known values, say, $a$ and $b$. What is the smallest value of $L$ for which we can reconstruct the whole vector ${\tt v}$ uniquely from the Fourier data? For which values of $L$ the reconstruction is numerically stable? Finally, we wish to develop a computational algorithm for the reconstruction.

We can simplify the problem by writing
\begin{align}
\label{yn_xn}
v_n = a + (b-a)x_n \ ,
\end{align}
where $x_n$ can take only two values, $0$ and $1$. We will say that the vector  ${\tt x} = (x_1,x_2 \ldots , x_N)$ is {\em binary}. Substituting \eqref{yn_xn} into \eqref{DFT_for}, we obtain:
\begin{align}
\label{DFT_for_ab}
\tilde{v}_m = a N\delta_{m0} + (b-a) \tilde{x}_m \ ,
\end{align}
where $\delta_{ml}$ is the Kronecker delta-symbol and $\tilde{x}_m$ are defined in terms of $x_n$ analogously to the DFT convention \eqref{DFT_for}. 

Equation \eqref{DFT_for_ab} can be inverted to yield the relation
\begin{align}
\label{txm_tym}
\tilde{x}_m = \frac{\tilde{v}_m - a N \delta_{m0}}{b-a} \ .
\end{align}
Therefore, if $\tilde{v}_m$ is known, then $\tilde{x}_m$ is also known. We thus see that the inverse problem of finding ${\tt v}$ from $\tilde{\tt v}$ is mapped onto the problem of finding a binary vector ${\tt x}$ from its DFT $\tilde{\tt x}$.

Therefore, we are interested in reconstructing the full binary vector ${\tt x}$ from a limited set of coefficients $\tilde{x}_m$ with $|m| \leq L$. If $L=0$, the only information about ${\tt x}$ that is present in the data is the popcount (the total number of 1s in ${\tt x}$). Reconstructing ${\tt x}$ with only this information is obviously impossible. However, the popcount is an important constraint on the possible solutions. In what follows, we assume that the popcount
\begin{align}
\label{r_def}
r \equiv \tilde{x}_0 = \sum_{n=1}^N x_n
\end{align}
is known with a high degree of confidence, so that \eqref{r_def} can be viewed as a hard constraint on the possible inverse solutions. 

We denote the set of all vectors ${\tt x}$ of length $N$ containing $r$ 1s by $\Omega(N,r)$. The size of this set is 
\begin{align}
\label{S_Omega_def}
S[\Omega(N,r)] = \frac{N!}{r!(N-r)!} \ .
\end{align}
It is sufficient to consider $r$ in the range 
\begin{align}
\label{r_range}
0 < r \leq M = \frac{N-1}{2}
\end{align}
because the sets $\Omega(N,r)$ and $\Omega(N,N-r)$ can be obtained from each other by the substitution $0 \leftrightarrow 1$. Therefore, the problems with $r=q$ and $r=N-q$ are mathematically identical. In what follows, we assume that $r$ is in the range \eqref{r_range}. Note that $r=0$ is a technically possible but trivial case since $r=0$ implies that all elements of ${\tt x}$ are 0s. Therefore, we exclude this possibility in \eqref{r_range}.

The band-limited to $L$ Fourier-space distance between any two vectors ${\mathtt x},{\mathtt y} \in \Omega(N,r)$ is defined as
\begin{align}
\label{chi_def}
\chi_p({\tt x}, {\tt y}; L) = \left[ \frac{1}{L}\sum_{m=1}^L \left\vert \tilde{x}_m - \tilde{y}_m \right\vert^p\right]^\frac{1}{p} \ , \ \ p\ge 1 \ .
\end{align}
Note that the term with $m=0$ is excluded from the summation. The real-space distance between ${\tt x}, {\tt y} \in \Omega(N,r)$ is defined as
\begin{align}
\label{d_def}
d({\tt x}, {\tt y}) = \frac{1}{2}\sum_{n=1}^N |x_n - y_n| \ .
\end{align}
If the distance between two vectors in $\Omega(N,r)$ is $d$, one can be obtained from the other by $d$ pair-wise switches of 0s and 1s. The possible values of $d$ are in the interval $0 \leq d \leq r$ assuming $r$ is in the range \eqref{r_range}. If $L=M$, the invertibility of DFT implies that the statements $\chi_p({\tt x}, {\tt y}; M) = 0$, $d({\tt x}, {\tt y}) = 0$ and ${\tt x} = {\tt y}$ are equivalent. However, if $L<M$, we do not generally know whether there exist pairs of distinct vectors ${\tt x} \neq {\tt y}$ for which $\chi_p({\tt x}, {\tt y}; L) = 0$.

Clearly, if $r=1$, it is sufficient to know only one additional Fourier coefficient, say, $\tilde{x}_1$. The inverse problem is then reduced to finding the position $\nu$ where the single 1 is located. We can use the equation $\tilde{x}_1 = \exp(\cu \dft \nu)$ to find $\nu$. If $\tilde{x}_1$ is in range of the forward operator, then the above equation has a unique integer solution in the interval $[1,N]$. If $\tilde{x}_1$ is not in range, then the equation has no integer solutions. One can still find the integer $\nu$ that minimizes the error $ |\tilde{x}_1 - \exp(\cu \dft \nu)|$. The case $r=2$ is also easy to analyze. The problem becomes difficult when $r \sim N/2$ and $N\gg 1$ so that $S[\Omega(N,r)]$ is combinatorially large. The remainder of his paper is largely focused on this more difficult case.

\section{A numerical example}
\label{sec:num}

As a first step, we can investigate the problem numerically. To this end, we have considered the following two model vectors ${\tt x}_{\rm mod}$:
\begin{align*}
&(a) \ N=31, \ r=15, \ S[\Omega(N,r)] = 300,540,195 \\
&{\tt x}_{\tt mod} = (1 0 0 1 0 1 1 0 0 0 0 1 1 1 0 1 1 0 1 1 0 0 0 1 1 0 1 0 1 0 0)
\end{align*}
and
\begin{align*}
&(b) \ N=33, \ r=16, \ S[\Omega(N,r)] = 1,166,803,110 \\
&{\tt x}_{\tt mod} = (1 0 0 1 0 0 1 1 0 0 0 1 1 0 0 1 1 1 0 0 1 0 1 0 1 0 0 1 1 0 1 1 0)
\end{align*}
For these values of $N$, all elements of $\Omega(N,r)$ can be constructed explicitly on a computer. 

\begin{figure}
\centering
\includegraphics[]{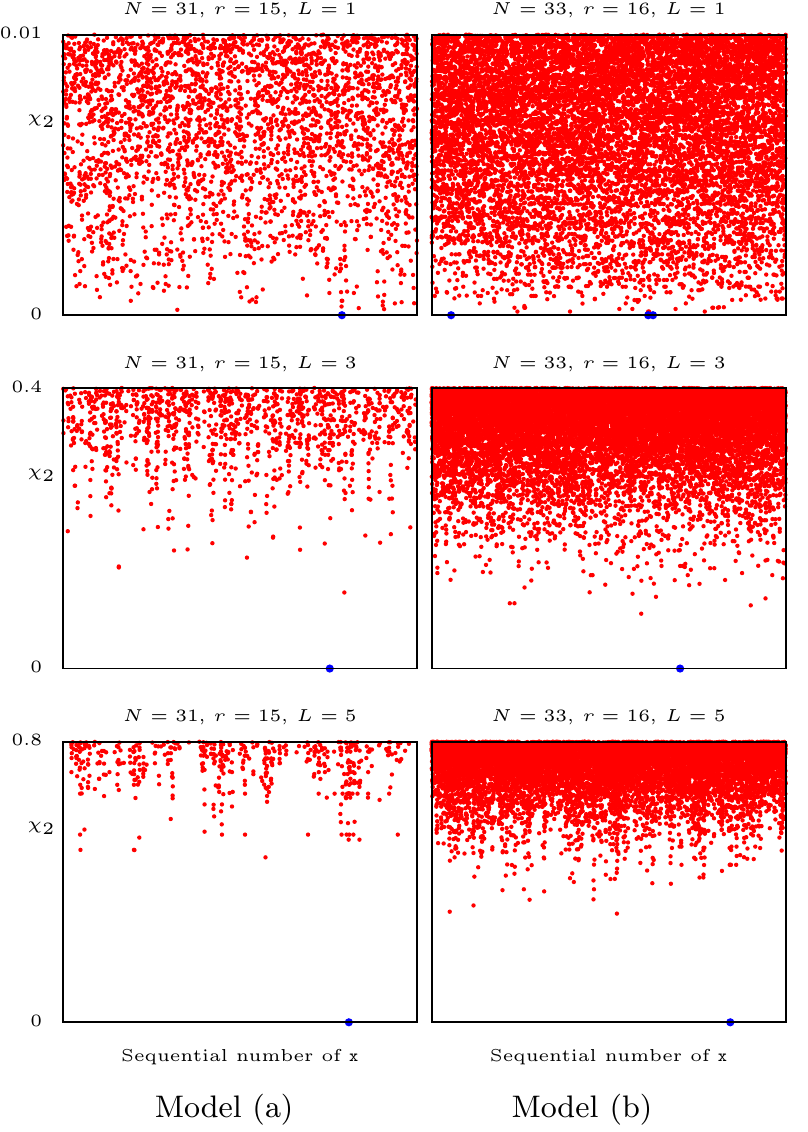}
\caption{\label{fig:1} Fourier-space distances $\chi_2({\tt x}, {\tt x}_{\tt mod}; L)$ between various ${\tt x} \in \Omega(N,r)$ and a model vector ${\tt x}_{\rm mod}$ with the same $N$ and $r$. The left and right columns correspond to the models (a) and (b), which are defined in the text. Different rows correspond to different values of $L$. All data points that fit the vertical scale of each plot are shown. Projections onto the horizontal axis are the sequential numbers of the data points and have no other significance. Due to the finite size of the dots  that are used to represent the data points, it may appear that one sequential number (a projection onto the horizontal axis) corresponds to more than one dot; in fact, this is not so. Large blue dots mark exact zeros. Note that, for $L=1$, $\chi_p$ is independent of $p$.}
\end{figure}

In Fig.~\ref{fig:1}, we display the quantities $\chi_2({\tt x}, {\tt x}_{\tt mod}; L)$ (below some thresholds) for all ${\tt x} \in \Omega(N,r)$, various $L$, and the two model vectors ${\tt x}_{\tt mod}$ defined above. It can be seen that, for $N=31$ and all $L$ considered, there exists only one ${\tt x} \in \Omega(N,r)$ for which $\chi_2({\tt x}, {\tt x}_{\tt mod}; L) = 0$. This ${\tt x}$ is the true solution, that is, it is identical to ${\tt x}_{\tt mod}$. This result implies that the knowledge of just the first two Fourier coefficients $\tilde{x}_0=r$ and $\tilde{x}_1$ suffices to find the whole ${\tt x}_{\tt mod}$. This result is surprisingly strong but, as discussed below, not unexpected. One obvious observation is that $31$ is a prime number. We will show that uniqueness of inverse solutions with $L=1$ is a general property of all prime $N$. 

In the case $N=33$ (not a prime), there are three distinct vectors ${\tt x}$ with $\chi_2({\tt x}, {\tt x}_{\tt mod};1) = 0$; only one of them is equal to ${\tt x}_{\rm mod}$. The other two are false solutions. However, uniqueness of the inverse solution is restored by selecting $L\ge 3$ ($L=2$ is still insufficient). In the next section, the theoretical reasons why $L=3$ provides the unique solution in this case will be given. 

\section{Uniqueness of inverse solutions}
\label{sec:uni}

We now fix a few definitions and prove two key uniqueness theorems.

\begin{definition}
\label{df:1}
We say that two vectors ${\tt x}, {\tt y} \in \Omega(N,r)$ are $L$-distinguishable if $\chi_p({\tt x}, {\tt y}; L) > 0$, where $1 \leq L \leq M$, and $L$-indistinguishable otherwise. If this property holds for some $p$, it holds for all $p \ge 1$, including the formal limit $p=\infty$. To generalize the definition, we say that vectors of the same length $N$ but with different popcounts $r$ are $0$-distinguishable~\footnote{Such two vectors do not belong to the same set $\Omega(N,r)$.}. All vectors in $\Omega(N,r)$ have the same popcount and are therefore $0$-indistinguishable. If two vectors are $L$-distinguishable, they are also $L'$-distinguishable for any $L'>L$. 
\end{definition}

\begin{definition} 
\label{df:2}
We use the acronym IP($N,r,L$) to denote the inverse problem of reconstructing a generic binary vector ${\tt x}$ of known length $N$ and popcount $r$ from the set of its DFT coefficients $\tilde{x}_m$ with $1 \leq m \leq L$ (the coefficient $\tilde{x}_0=r$ with $m=0$ is already included in the data set). Since $\tilde{x}_{-m} = \tilde{x}_m^*$, coefficients with negative indexes do not provide additional information and are not included in the data set. In this paper, we consider only odd $N$. The possible values of $r$ in IP($N,r,L$) are defined by the inequality \eqref{r_range}.
\end{definition}

\begin{definition}
\label{df:3}
We say that IP($N,r,L$) is uniquely solvable if all pairs of distinct vectors ${\tt x}, {\tt y} \in \Omega(N,r)$ are $L$-distinguishable. 
\end{definition}

\begin{definition} 
\label{df:4}
We say that a binary vector ${\tt x} \in \Omega(N,r)$ contains a regular polygon if there exists integers $m$ and $k$ such that $x_n=1$ for all $n \equiv m \pmod k$. We call $k$ the order of the polygon, and refer to such a polygon as a $k$-gon. Similarly, we say that ${\tt x}$ contains an empty regular polygon if $x_n=0$ for all $n \equiv m \pmod k$. Note that these definitions require $k$ to divide $N$. We say that ${\tt x}$ contains a pair of regular polygons if ${\tt x}$ contains both a regular polygon and an empty regular polygon of the same order. Regular polygons are disjoint in ${\tt x}$ if they do not share any indices $n$. 
\end{definition}

\begin{theorem}
\label{th:1}
IP($N,r,1$) is uniquely solvable for all $r$ in the interval \eqref{r_range} if $N$ is prime.
\end{theorem}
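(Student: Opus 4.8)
The plan is to reduce the claim to a statement about vanishing sums of roots of unity and then exploit the special form of the cyclotomic polynomial $\Phi_N$ when $N$ is prime. By Definition~\ref{df:3}, I must show that any two distinct ${\tt x}, {\tt y} \in \Omega(N,r)$ are $1$-distinguishable, i.e., that $\tilde{x}_1 \neq \tilde{y}_1$. Writing $\omega = e^{\cu \dft}$ (a primitive $N$-th root of unity) and $c_n = x_n - y_n$, the assumption $\tilde{x}_1 = \tilde{y}_1$ becomes the vanishing sum $\sum_{n=1}^N c_n \omega^n = 0$. Two features of the coefficients $c_n$ will drive the argument: each $c_n \in \{-1,0,1\}$ because $x_n, y_n \in \{0,1\}$, and $\sum_{n=1}^N c_n = 0$ because ${\tt x}$ and ${\tt y}$ share the same popcount $r$ by \eqref{r_def}.

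First I would package the vanishing sum as a polynomial identity. Using $\omega^N = 1$ to fold the $n=N$ term into the constant term, I obtain a polynomial $Q(z)$ of degree at most $N-1$ with coefficients drawn from $\{-1,0,1\}$ such that $Q(\omega) = 0$. Since $\omega$ is a root of $Q$ and its minimal polynomial over $\mathbb Q$ is the cyclotomic polynomial $\Phi_N$, it follows that $\Phi_N \mid Q$ in $\mathbb Q[z]$.

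The crux is the degree count, and this is where primality enters decisively. When $N$ is prime, $\Phi_N(z) = 1 + z + \cdots + z^{N-1}$ has degree exactly $N-1$, matching the degree bound on $Q$. Hence $Q = \lambda \Phi_N$ for some constant $\lambda$. The popcount constraint now finishes the argument: the sum of the coefficients of $Q$ equals $\sum_{n=1}^N c_n = 0$, that is, $Q(1) = 0$; but $\Phi_N(1) = N \neq 0$, so $0 = Q(1) = \lambda N$ forces $\lambda = 0$. Therefore $Q \equiv 0$, every $c_n$ vanishes, and ${\tt x} = {\tt y}$, contradicting distinctness.

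The main obstacle is not any single calculation but the clean invocation of the cyclotomic structure, together with tracking why \emph{both} hypotheses are indispensable. The reduction $\omega^N = 1$ is what guarantees $\deg Q \le N-1$, making the divisibility $\Phi_N \mid Q$ collapse to a scalar multiple rather than merely a factor; and the zero-sum condition $\sum_n c_n = 0$ is precisely what eliminates the otherwise-admissible scalings $\lambda = \pm 1$, which would correspond to the all-ones coefficient vector. Without the popcount constraint this rigidity would fail, so the proof hinges on combining the restricted coefficient range with the vanishing of $Q(1)$. I expect the non-prime case to be harder exactly because $\Phi_N$ then has degree strictly less than $N-1$, leaving room for nontrivial solutions and necessitating the larger band limit $L$ analyzed in the next theorem.
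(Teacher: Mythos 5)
Your proof is correct and takes essentially the same route as the paper's: both turn the assumption $\tilde{x}_1=\tilde{y}_1$ into a vanishing sum $\sum_{n} z_n e^{\cu\dft n}=0$ with $z_n=x_n-y_n\in\{0,\pm1\}$, encode these coefficients in a polynomial of degree at most $N-1$ having $e^{\cu\dft}$ as a root, and use the irreducibility (minimality) of $\Phi_N$, whose degree is $N-1$ exactly because $N$ is prime, to conclude that this polynomial is a constant multiple of $\Phi_N$. The only difference is in how the constant is eliminated --- you evaluate at $1$, playing $Q(1)=\sum_n z_n=0$ (equal popcounts) against $\Phi_N(1)=N$, whereas the paper observes that constant coefficients $C=\pm 1$ would make ${\tt x}$ and ${\tt y}$ the all-ones and all-zeros vectors, contradicting that both lie in $\Omega(N,r)$ --- and both variants are valid.
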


\begin{proof}
Theorem~\ref{th:1} is proved by showing that all vectors in $\Omega(N,r)$ are pairwise 1-distinguishable for $N$ prime. Consider two distinct vectors ${\tt x}, {\tt y} \in \Omega(N,r)$ and suppose that ${\tt x}$ and  ${\tt y}$ are $1$-indistinguishable. Let ${\tt z} = {\tt x} - {\tt y}$ with $z_n$ ($n=1,2,\ldots N$) denoting the real-space components of ${\tt z}$ and $\tilde{z}_m$ denoting its DFT coefficients defined according to the convention \eqref{DFT_for}. Note that ${\tt z}$ is not a binary vector as its entries can take three possible values: $0$ and $\pm 1$. Therefore, ${\tt z} \notin \Omega(N,r)$. Since we have assumed that ${\tt x}$ and ${\tt y}$ are $1$-indistinguishable, the following equality must hold:
\begin{align}
\label{z1_thm1}
0 = \tilde{z}_1 = \sum_{n=1}^N z_n e^{\cu \dft n} \ .
\end{align}
Since $N$ is prime, the $N-1$ exponential factors $e^{\cu \dft n}$ with $1 \leq n < N-1$ (excluding the term $e^{\cu \dft N}=1$) form the complete set of $N$-th primitive roots of unity~\cite{ireland_book_1990}. Therefore, the $N$-th cyclotomic polynomial~\cite{ireland_book_1990} has the form
\begin{subequations}
\begin{align}
\label{cyclotomic}
\Phi_N(x) = \prod_{n=1}^{N-1} (x - e^{\cu\dft n}) = \sum_{n=0}^{N-1}x^n \ .
\end{align}
By Eisenstein's criterion, this polynomial is irreducible over the rationals~\cite{ireland_book_1990}. We also introduce the polynomial~\footnote{Note that the coefficient indexes in \eqref{g_def} are correct. An alternative way to define $g(x)$ with the same properties is $g(x)= z_1 + z_2 x + ... + z_Nx^{N-1}$.}
\begin{align}
\label{g_def}
g(x) = z_N + z_1 x + z_2 x^2 + \cdot + z_{N-1}x^{N-1} \ .
\end{align}
\end{subequations}
Assuming that \eqref{z1_thm1} holds, $g(x)$ has a root at $e^{\cu\dft}$. We now observe that both $\Phi_N(x)$ and $g(x)$ are polynomials with rational coefficients of degree $N-1$ and with the common root $e^{\cu\dft}$. Since $\Phi_N(x)$ is irreducible over the rationals, and thus a minimal polynomial, these two polynomials can only differ by a constant. This implies that $z_n = C$ for some constant $C$. Recall $z_n$ can only take values of $0$ and $\pm 1$. If $C=\pm 1$, then ${\tt x}$ is a vector of all 1s and ${\tt y}$ is a vector of all 0s (or vice versa), which violates the assumption that they are both in $\Omega(N,r)$. Hence the only possibility is $z_n=0$, which is equivalent to ${\tt x}={\tt y}$ and contradicts the initial assumption that ${\tt x}$ and ${\tt y}$ are distinct.
\end{proof}

Theorem~\ref{th:1} implies that, at least theoretically, any binary vector ${\tt x}$ of a prime length $N$ can be uniquely recovered from its two Fourier coefficients $\tilde{x}_0 = r$ and $\tilde{x}_1$. It does not imply that this problem can always be solved in a numerically stable manner. Stability of inversion is discussed in Section~\ref{sec:sta} below. 

When $N$ is not prime, we do not have such a strong statement of uniqueness. However, if $N$ has two prime factors, we can characterize the false solutions and derive the sufficient conditions of uniqueness assuming that some additional DFT coefficients are known. The following Lemma establishes the necessary and sufficient condition for two binary vectors in $\Omega(N,r)$ to be $1$-distinguishable.

\begin{lemma}
\label{lm:1}
Let $N = pq$ where $1 < p \leq q$ are primes (not necessarily distinct). Let ${\tt x} \in \Omega(N,r)$. Then ${\tt x}$ is $1$-indistinguishable from some other (different from ${\tt x}$) vector(s) in $\Omega(N,r)$ if and only if ${\tt x}$ contains at least one pair of $p$- or $q$-gons (a $p$-gon is a regular polygon with $p$ vertices) according to Definition~\ref{df:4}. Equivalently, ${\tt x}$ is $1$-distinguishable from all other vectors in $\Omega(N,r)$ if and only if it does not contain any pairs of $p$- or $q$-gons.
\end{lemma}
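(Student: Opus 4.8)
The plan is to recast $1$-indistinguishability as a vanishing sum of $N$-th roots of unity and then exploit the factorization $N=pq$. Exactly as in Theorem~\ref{th:1}, two distinct vectors ${\tt x},{\tt y}\in\Omega(N,r)$ are $1$-indistinguishable iff ${\tt z}={\tt x}-{\tt y}$ obeys $\tilde z_1=\sum_{n=1}^N z_n e^{\cu\dft n}=0$, where now $z_n\in\{0,\pm1\}$ and, since both vectors have popcount $r$, also $\sum_n z_n=0$. The indices with $z_n=+1$ are where a $1$ of ${\tt x}$ is switched to $0$ and those with $z_n=-1$ where a $0$ is switched to $1$. This yields the easy (\emph{if}) direction at once: if ${\tt x}$ contains a filled regular polygon and a disjoint empty polygon of the same order, I set ${\tt z}=+1$ on the filled one and $-1$ on the empty one. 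The indices of a residue class modulo $q$ (a $p$-vertex polygon) or modulo $p$ (a $q$-vertex polygon) have roots $e^{\cu\dft n}$ forming a rotated copy of all $p$-th (resp.\ $q$-th) roots of unity, hence summing to zero, so $\tilde z_1=0$; equal orders give equal numbers of $+1$ and $-1$ entries, so ${\tt y}={\tt x}-{\tt z}\in\Omega(N,r)$ is a genuine second solution.

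For the hard (\emph{only if}) direction I would first treat $p\neq q$ via the Chinese remainder theorem, which is the technical heart. Reindexing $n$ by $(a,b)=(n\bmod p,\ n\bmod q)$ turns $e^{\cu\dft n}$ into $\zeta_p^{a}\zeta_q^{b}$ up to a harmless relabeling of residues, where $\zeta_p=e^{2\pi\cu/p}$ and $\zeta_q=e^{2\pi\cu/q}$, so $\tilde z_1=0$ becomes $\sum_{a,b} w_{a,b}\,\zeta_p^{a}\zeta_q^{b}=0$ with $w_{a,b}\in\{0,\pm1\}$. Factoring over $\zeta_q$ and using that $\Phi_q$ remains the minimal polynomial of $\zeta_q$ over $\mathbb{Q}(\zeta_p)$ (because $p\ne q$), the inner column sums $\sum_a w_{a,b}\zeta_p^{a}$ must all coincide; applying the Theorem~\ref{th:1} argument to $\zeta_p$ then forces $w_{a,b}-w_{a,b'}$ to be independent of $a$, i.e.\ $w_{a,b}=g_a+h_b$ for integer vectors $g,h$. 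Since $w\in\{0,\pm1\}$ the combined spread of $g$ and $h$ is at most $2$, and since ${\tt z}\neq0$ with $\sum_n z_n=0$ forces both $+1$ and $-1$ to appear, that spread is exactly $2$. Hence the spread sits entirely in $g$, entirely in $h$, or splits one--one.

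The main obstacle is the one--one split, where $w=+1$ on a rectangle $G_1\times H_1$ and $-1$ on $G_0\times H_0$ and ${\tt x}$ need not contain any full polygon. I would eliminate it using the popcount constraint: $\sum w_{a,b}=0$ gives $|G_1|\,|H_1|=|G_0|\,|H_0|$, which with $|G_0|+|G_1|=p$ and $|H_0|+|H_1|=q$ reduces to $p\,|H_1|=q\,|G_0|$; as $\gcd(p,q)=1$ this forces $q\mid|H_1|$, impossible for $0<|H_1|<q$. In the two surviving cases $w$ depends on a single coordinate, so each nonzero row (a residue class modulo $p$, a $q$-gon) or column (a residue class modulo $q$, a $p$-gon) is uniformly $+1$ or $-1$: the rows/columns carrying $+1$ are filled polygons of ${\tt x}$, those carrying $-1$ are empty ones, and $\sum_n z_n=0$ guarantees at least one of each, i.e.\ a polygon pair.

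Finally the prime-power case $p=q$ ($N=p^2$) must be handled separately, since the CRT reindexing no longer applies. Writing $n=\alpha+\beta p$ gives $e^{\cu\dft n}=\omega^{\alpha}\zeta_p^{\beta}$ with $\omega=e^{\cu\dft}$ and $\omega^{p}=\zeta_p$, and because $x^{p}-\zeta_p$ is the minimal polynomial of $\omega$ over $\mathbb{Q}(\zeta_p)$ the set $\{1,\omega,\dots,\omega^{p-1}\}$ is a basis of $\mathbb{Q}(\omega)$ over $\mathbb{Q}(\zeta_p)$. The vanishing sum therefore forces each inner sum $\sum_{\beta} z_{\alpha,\beta}\zeta_p^{\beta}$ to vanish \emph{exactly}, and Theorem~\ref{th:1} makes $z_{\alpha,\beta}$ independent of $\beta$; thus ${\tt z}$ is supported on full residue classes modulo $p$, and the popcount constraint again produces a filled/empty $p$-gon pair, completing the proof.
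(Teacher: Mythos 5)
Your proposal is correct, and its skeleton matches the paper's own proof: both reduce $1$-indistinguishability to a vanishing sum $\sum_n z_n e^{\cu\dft n}=0$ with $z_n\in\{0,\pm 1\}$ and $\tilde z_0=0$, split that sum through the factorization $N=pq$, use the prime-order cyclotomic argument to force an additive structure on the coefficient array, and finish with popcount bookkeeping. The genuine differences are in execution, and they are worth noting. First, where the paper cites Lenstra (Theorems 2.2 and 2.3) and Conway--Jones (Lemma 1) for the structural facts about vanishing sums --- that the column sums $\sum_a w_{a,b}\zeta_p^a$ must all coincide when $p\neq q$, and must each vanish individually when $p=q$ --- you prove these facts directly from field degrees: $\Phi_q$ stays irreducible over $\mathbb{Q}(\zeta_p)$ since $[\mathbb{Q}(\zeta_p,\zeta_q):\mathbb{Q}(\zeta_p)]=q-1$, and $x^p-\zeta_p$ is the minimal polynomial of $e^{2\pi\cu/p^2}$ over $\mathbb{Q}(\zeta_p)$, so $\{1,\omega,\dots,\omega^{p-1}\}$ is a basis. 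This makes your proof self-contained. Second, the endgame is organized differently: the paper sums the relation $z_{(k\ell)}-z_{(k1)}=C_\ell$ over all indices and cases on the divisibility constraint $\sum_k z_{(k1)}\in\{0,\pm p\}$, whereas you case on how the spread of $w_{a,b}=g_a+h_b$ distributes between $g$ and $h$, which forces you to confront and explicitly eliminate the ``rectangle'' configuration ($w=+1$ on $G_1\times H_1$, $-1$ on $G_0\times H_0$) via $p\lvert H_1\rvert=q\lvert G_0\rvert$ and coprimality. These are equivalent uses of $\tilde z_0=0$: in the paper's bookkeeping the rectangle case never surfaces because a mixed column there would have sum $-\lvert G_0\rvert$ or $\lvert G_1\rvert$, which cannot be a multiple of $p$; your version makes this exclusion explicit. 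What your route buys is independence from the cited literature and a transparent classification of all possible coefficient arrays; what the paper's route buys is brevity, by outsourcing exactly the two algebraic facts you reprove.
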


The proof of Lemma~\ref{lm:1} is given in Appendix~\ref{app:A}. It relies on the algebraic ideas that were used to study the vanishing sums of the roots of unity~\cite{mann_1965_1, conway_1976_1, lenstra_1978_1, lam_1996_1, lam_2000_1}. Geometrically, the proof states that the only way ${\tt x}$ and ${\tt y}$ in $\Omega(N,r)$ can be $1$-indistinguishable is if they agree element-wise except for the locations that make up an equivalent number of regular $p$- or $q$-gons. This is illustrated in Fig.~\ref{fig:2}. 

\begin{figure}
\centering
\includegraphics[]{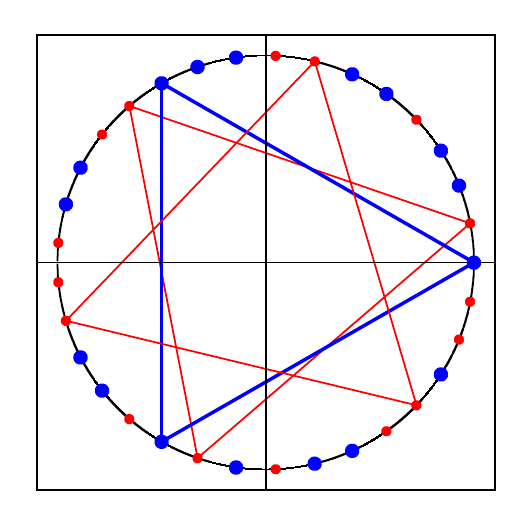}
\caption{\label{fig:2} Geometrical illustration of Lemma~\ref{lm:1} for the model vector (b) with $N=33$. Plotted are the points $e^{\cu\dft n}$ on the unit circle. The small red and large blue dots are obtained for the values of $n$ such that $x_n=1$ and $x_n=0$, respectively. There are two $3$-gons contained in the model vector (b), which are shown by thin red lines. There is also one empty $3$-gon shown by thick blue lines. This gives two ways in which a pair of polygons can be formed; each distinct pair results in a distinct false solution. The two false solutions shown in Fig.~\ref{fig:1} are obtained by switching 1s in the positions corresponding to the vertices of one of the red $3$-gons with 0s in the positions corresponding to the vertices of the blue (empty) $3$-gon.}
\end{figure}

\begin{figure*}
\centering
\includegraphics[]{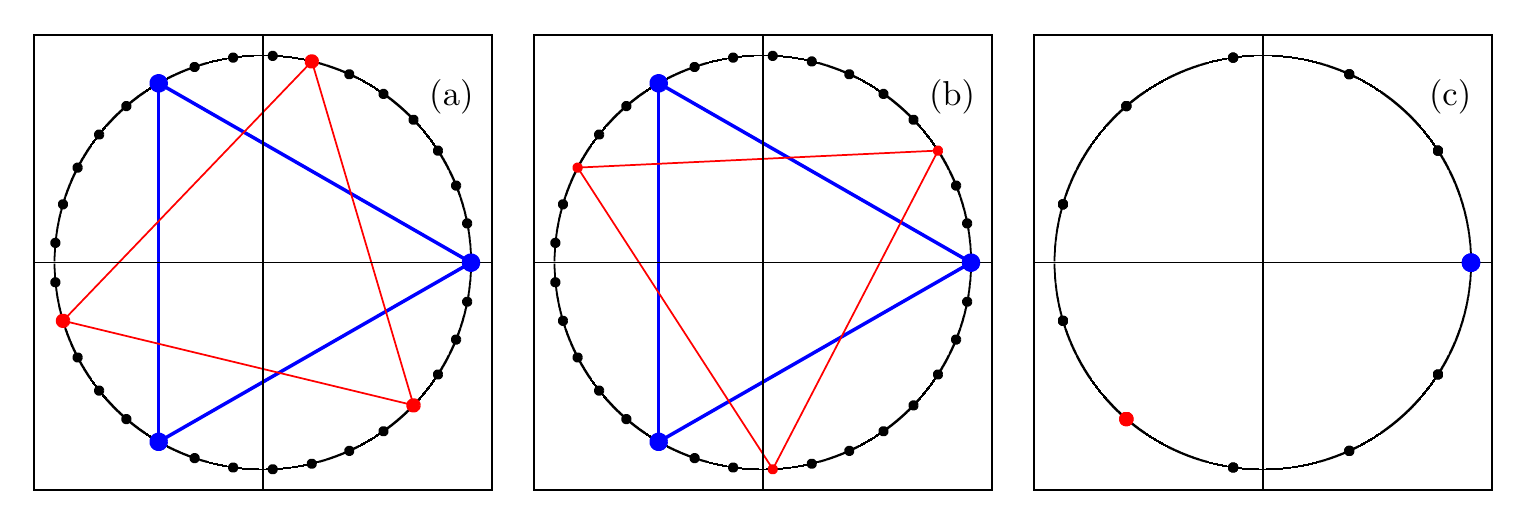}
\caption{\label{fig:3} Geometrical illustration of Lemma 2. Let ${\tt x}$ be given by the model vector (b) ($N=33,r=16$) and let ${\tt y}$ be one of the two vectors that are distinct but $1$-indistinguishable from ${\tt x}$ (we have chosen for ${\tt y}$ the specific false solution obtained by switching 1s and 0s in the pair of $3$-gons shown in Fig.~\ref{fig:2} that are geometrically farther apart).  Let, as in the proof of Theorem~\ref{th:1}, ${\tt z}={\tt x}-{\tt y}$. Plotted are the terms that enter the definition \eqref{DFT_for} of $\tilde{z}_1$ (a), $\tilde{z}_2$ (b), and $\tilde{z}_3$ (c). Thus, Panel (a) displays the terms $z_ne^{\cu\dft n}$ for $1\leq n\leq 33$. The terms with such $n$ that $z_n=0$, $z_n=1$ and $z_n=-1$ are represented by small black dots, intermediate-size red dots, and large blue dots, respectively. Panel (b) displays the terms $z_ne^{\cu \dft 2 n}$ according to the same color convention. The additional factor of $2$ in the exponent results in a permutation of the dots that are displayed in Panel (a); however, the $3$-gons are preserved. Note that the total number of dots in Panels (a) and (b) is the same and equal to $33$. It is clear that $\tilde{z}_1=\tilde{z}_2=0$ since regular $3$-gons sum to zero. In Panel (c), the terms $z_ne^{\cu \dft 3 n}$ are shown. For $1\leq n \leq 33$, the above terms take only $11$ distinct values so that each displayed dot corresponds to a sum of three terms with different $n$. However, only dots of the same color (same value of $z_n$) can overlap in this example. It can be seen that each $3$-gon of Panel(a) collapses to a single point in Panel (c). Therefore, $\tilde{z}_3 \neq 0$ implying that ${\tt x}$ and ${\tt y}$ are $3$-distinguishable.}
\end{figure*}

While Lemma~\ref{lm:1} establishes the necessary and sufficient condition for two distinct vectors in $\Omega(N,r)$ to be $1$-indistinguishable (in the two prime factors case), the following Lemma provides a potential remedy to the non-uniqueness. Specifically, it tells us how many additional DFT coefficients must be included in the data set to guarantee uniqueness.

\begin{lemma}
\label{lm:2}
Let $N=pq$ with the same conditions on $p,q$ as in Lemma~\ref{lm:1}, and let ${\tt x} \in \Omega(N,r)$. Let $L$ be the smallest integer for which ${\tt x}$ contains a pair of $L$-gons, which are not subsets of any larger-order pair of regular polygons. Then ${\tt x}$ is $L$-distinguishable from all other vectors in $\Omega(N,r)$. Moreover, $L$ is the smallest value of $k$ for which ${\tt x}$ is $k$-distinguishable from all other vectors in $\Omega(N,r)$.
\end{lemma}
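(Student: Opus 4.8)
The plan is to run the cyclotomic argument of Theorem~\ref{th:1} and Lemma~\ref{lm:1}, but now to track exactly \emph{which} roots of unity are forced to be zeros of the difference polynomial as the band limit $L$ grows. Fix $\omega=e^{\cu\dft}$, a primitive $N$-th root of unity; let ${\tt y}\in\Omega(N,r)$ with ${\tt y}\neq{\tt x}$, put ${\tt z}={\tt x}-{\tt y}$ (so $z_n\in\{-1,0,1\}$ and $\sum_n z_n=0$), and form $Z(x)=\sum_{n=1}^N z_n x^n$, so that $\tilde z_m=Z(\omega^m)$. Then ${\tt x}$ and ${\tt y}$ are $k$-indistinguishable exactly when $Z(\omega^m)=0$ for $m=0,1,\dots,k$. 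The whole proof is an analysis of this vanishing condition for $N=pq$.

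First I would record the frequency content of one pair of polygons. A filled $k$-gon and an empty $k$-gon are residue classes modulo $N/k$ (here $k\mid N$) at two offsets $a\neq b$; a geometric-sum computation gives $\tilde z_m=k(\omega^{ma}-\omega^{mb})$ when $k\mid m$ and $\tilde z_m=0$ otherwise. As the polygons are distinct, $\omega^{ka}\neq\omega^{kb}$, so the least positive index with $\tilde z_m\neq0$ is exactly $m=k$. Applied to the filled and empty $L$-gons supplied by the hypothesis, this builds a ${\tt y}\in\Omega(N,r)$, ${\tt y}\neq{\tt x}$, that is $(L-1)$-indistinguishable from ${\tt x}$; hence ${\tt x}$ is not $(L-1)$-distinguishable from all of $\Omega(N,r)$, and the minimal distinguishing index is at least $L$. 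This is one half of the claim.

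The substance is the reverse inequality: no ${\tt y}\neq{\tt x}$ is $L$-indistinguishable from ${\tt x}$. Since $Z$ has rational coefficients, $Z(\omega^m)=0$ forces the minimal polynomial of $\omega^m$, namely $\Phi_d$ with $d=N/\gcd(m,N)$, to divide $Z$. For $N=pq$ the only orders $d$ are $1,p,q,pq$, and I would tabulate which factors the indices $m=0,1,\dots,L$ produce: the index $m=0$ forces $\Phi_1$, coprime indices force $\Phi_{pq}$, the smallest index divisible by $p$ (namely $m=p$) forces $\Phi_q$, and the smallest divisible by $q$ (namely $m=q$) forces $\Phi_p$. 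These distinct cyclotomics are pairwise coprime, so their product divides $Z$, and $\tilde z_m=Z(\omega^m)$ can survive only where $\omega^m$ is a root of the one factor \emph{not} forced; those indices are the multiples of a single prime, and the inverse DFT then makes ${\tt z}$ constant on the regular polygons of the corresponding order. The final ingredient is a disjointness fact that also explains the qualifier in the statement: by the Chinese remainder theorem every $p$-gon meets every $q$-gon in a single index, so a filled together with an empty $p$-gon forbids any $q$-gon from being wholly filled or empty, and symmetrically; hence ${\tt x}$ contains pairs of polygons of \emph{one} order only, which is the $L$ of the statement, and ``not a subset of a larger-order pair'' simply names this controlling order. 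Now let ${\tt y}$ be $L$-indistinguishable. If $L=q$ (in particular if $p=q$), the indices $m=0,\dots,L$ force all four cyclotomic factors, so $Z\equiv0$ and ${\tt y}={\tt x}$, a contradiction. If instead $L=p<q$, the indices force $\Phi_1,\Phi_q,\Phi_{pq}$ but not $\Phi_p$, so by the previous sentence ${\tt z}$ is constant modulo $p$ and ${\tt x}$ would have to contain a pair of $q$-gons---impossible, since it already contains a pair of $p$-gons. Either way the offending ${\tt y}$ cannot exist, ${\tt x}$ is $L$-distinguishable from all of $\Omega(N,r)$, and combined with the lower bound this identifies $L$ as the minimal distinguishing index.

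I expect the crux to be the structural step in the third paragraph: turning ``$Z$ vanishes on a prescribed set of roots of unity'' into the real-space statement that ${\tt z}$ is constant on a family of polygons. The degree bookkeeping has to be watched (for $L=p$ the forced product has degree $N-p+1$, leaving precisely the $p-1$ frequencies that are multiples of $q$), and the coincident case $p=q$ must be folded in uniformly rather than treated as an afterthought. Everything else reduces to the geometric-sum evaluation and the Chinese remainder argument, both routine.
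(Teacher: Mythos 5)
Your proof is correct where it applies, and it reaches the crucial upper bound by a genuinely different route than the paper. The paper's Appendix~B stays in real space as long as possible: it invokes Lemma~\ref{lm:1} (and the Lenstra / Conway--Jones results inside its proof) to conclude that any $1$-indistinguishable ${\tt y}$ differs from ${\tt x}$ by flipped polygon pairs of a single order $L$, writes $\tilde{z}_n$ in the factored form \eqref{zeta_sum}, annihilates the frequencies $1\leq n<L$ via the geometric sum \eqref{zeta_sum_cases}, and then still must prove that the offset sum in \eqref{zn_reduced} is nonzero at $n=L$, which costs a second pass through the vanishing-sums machinery plus a counting argument ($L'$ many $L$-gons would give $LL'=N$, i.e.\ $\tilde{x}_0=N$). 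You instead work on the polynomial side throughout: vanishing of $Z$ at $\omega^m$ for $m=0,\dots,L$ forces the pairwise coprime factors $\Phi_{N/\gcd(m,N)}$ to divide $Z$; for $L\in\{p,q\}$ exactly one cyclotomic factor is left unforced, so the surviving frequencies are the nonzero multiples of a single prime, the inverse DFT makes ${\tt z}$ constant on polygons of the complementary order, and your Chinese-remainder disjointness fact (a filled plus an empty $p$-gon leaves no $q$-gon monochromatic) yields the contradiction. This buys self-containedness --- general irreducibility of cyclotomics plus CRT, analyzing $L$-indistinguishability directly rather than routing through $1$-indistinguishability --- and it folds in $p=q$ uniformly. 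One caveat: your case split ($L=q$ or $L=p<q$) silently assumes ${\tt x}$ genuinely contains a pair of $p$- or $q$-gons. The paper's statement also covers the degenerate case it calls $L=1$ (no such pair exists): there only $\Phi_1$ and $\Phi_{pq}$ are forced, both $\Phi_p$ and $\Phi_q$ survive, and your ``multiples of a single prime'' step breaks down. That case is exactly the reverse direction of Lemma~\ref{lm:1} and should be dispatched by citing it, which is precisely what the paper's first sentence does; with that one sentence added your argument is complete, and your minimality half (flip the hypothesized pair and evaluate the geometric sum) coincides in substance with the paper's.
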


The proof of Lemma~\ref{lm:2} is given in Appendix~\ref{app:B}. The geometric concept behind this proof is illustrated in Fig.~\ref{fig:3}. Lemma~\ref{lm:2} implies that, if $p\neq q$, then all vectors in $\Omega(N,r)$ are $q$-distinguishable. For $N = p^2$, all vectors are $p$-distinguishable. Considering again the case $N=33$, $r=16$, the best one can assume is that all vectors in $\Omega(33,16)$ are $11$-distinguishable. This implies that one must use $L \ge 11$ to guarantee uniqueness of IP($33,r,L$) for any $r \ge 11$. However, the model vector (b) does not contain any $11$-gons. Therefore, it can be recovered with $L=3$; using $L=11$ would be an overkill. We note that vectors that contain regular $11$-gons make a small subset of $\Omega(33,16)$ (are statistically rare). Therefore, using $L=3$ for $N=33,r=16$ entails a relatively small risk of running into a false solution. Moreover, Lemma~\ref{lm:2} tells us exactly what form these vectors, and the corresponding false solutions, take. The question of {\em statistically reliable} invertibility -- that is, accepting a small risk of finding a false solution -- is addressed more systematically in Section~\ref{sec:sta}.

In the absence of any {\it a priori} knowledge about the true solution apart from what is given in the data, we can state the following sufficient condition for uniqueness of IP($N,r,L$).

\begin{theorem}
\label{th:2}
Let $N=pq$ with the same conditions on $p,q$ as in Lemma~\ref{lm:1}, $r\leq M$, and 
\begin{align*}
L_0 = \max_{\ell,m \in {\{0,1\}}} \left(\{k = p^ \ell  q ^m \ : k\leq r\}\right) \ .
\end{align*}
Then IP($N,r,L)$ is uniquely solvable for any $L \ge L_0$.
\end{theorem}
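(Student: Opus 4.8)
The plan is to reduce the global uniqueness statement to a per-vector bound on the minimal distinguishing index supplied by Lemma~\ref{lm:2}, and then to dispose of that bound with a single counting observation. First I would simplify $L_0$. The four candidate values of $k=p^\ell q^m$ with $\ell,m\in\{0,1\}$ are $1,p,q$ and $pq=N$; since $r\le M$ and $N=2M+1$ we have $N>r$, so the value $k=N$ is excluded by the constraint $k\le r$. Hence $L_0=\max\{k\in\{1,p,q\}:k\le r\}$, the largest element of $\{1,p,q\}$ not exceeding $r$ (and $1\le r$ always, so $L_0$ is well defined and at least $1$).

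Next I would recast unique solvability. By Definition~\ref{df:3} it suffices to show that every pair of distinct vectors in $\Omega(N,r)$ is $L$-distinguishable for every $L\ge L_0$. Using the monotonicity built into Definition~\ref{df:1} (distinguishability at one level persists at all higher levels), it is enough to prove that each ${\tt x}\in\Omega(N,r)$ is $L_0$-distinguishable from all other vectors. Let $L({\tt x})$ denote the smallest $k$ for which ${\tt x}$ is $k$-distinguishable from every other vector of $\Omega(N,r)$. Lemma~\ref{lm:1} gives $L({\tt x})=1$ whenever ${\tt x}$ contains no pair of regular polygons, while Lemma~\ref{lm:2} identifies $L({\tt x})$ otherwise as the vertex count of the smallest maximal polygon pair carried by ${\tt x}$. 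Because $N=pq$, the only nontrivial regular polygons (in the sense of Definition~\ref{df:4}) have $p$ or $q$ vertices, so $L({\tt x})\in\{1,p,q\}$. The theorem thus reduces to the single inequality $L({\tt x})\le L_0$ for all ${\tt x}$, after which $L_0\ge L({\tt x})$ and monotonicity give $L_0$-distinguishability of ${\tt x}$ from all others.

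The observation that closes the argument is a capacity constraint: a filled $k$-gon occupies $k$ positions all equal to $1$, so a vector in $\Omega(N,r)$ can contain one only if $r\ge k$. I would then split into three cases by the size of $r$. If $q\le r$, then $L_0=q$ and the inequality is automatic since $L({\tt x})\le q$ for every ${\tt x}$. If $p\le r<q$, then $L_0=p$, and the bound $r<q$ forbids any filled $q$-gon, hence any $q$-gon pair, so $L({\tt x})\in\{1,p\}\le p$; here one checks that any $p$-gon pair present is automatically maximal, since for distinct primes $p<q$ a $p$-gon is never a subset of a $q$-gon. Finally, if $r<p$, then $L_0=1$, and $r<p\le q$ forbids both filled $p$-gons and filled $q$-gons, so ${\tt x}$ carries no polygon pair at all and $L({\tt x})=1$ by Lemma~\ref{lm:1}. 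In every case $L({\tt x})\le L_0$, which finishes the proof.

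The main delicacy is not computational but bookkeeping: the two competing ways of labeling a polygon (by its modulus, as in Definition~\ref{df:4}, versus by its number of vertices, which is the quantity that equals the distinguishing index in Lemma~\ref{lm:2}) must be kept straight, and one must verify that the ``not a subset of a larger-order pair'' clause of Lemma~\ref{lm:2} cannot push $L({\tt x})$ above the value allowed by the counting bound. This is where I would be most careful, in particular in the degenerate case $p=q$ (that is, $N=p^2$), where only $p$-gons occur, $\{1,p,q\}$ collapses to $\{1,p\}$, and the claim reduces to $L({\tt x})\le p=L_0$ whenever $p\le r$ and to $L({\tt x})=1$ otherwise.
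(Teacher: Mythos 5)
Your proof is correct and follows essentially the same route as the paper's: both reduce Theorem~\ref{th:2} to Lemma~\ref{lm:2} (together with Lemma~\ref{lm:1}) via the capacity observation that a $k$-gon requires $k$ ones, so every polygon pair in ${\tt x}$ has order at most $L_0$, and then invoke monotonicity of $L$-distinguishability. Your explicit case split on $r$ relative to $p$ and $q$, the check of the maximality clause, and the treatment of $p=q$ merely spell out what the paper's terse argument leaves implicit.
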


\begin{proof}
Theorem~\ref{th:2} is a direct consequence of Lemma~\ref{lm:2}. A vector ${\tt x} \in \Omega(N,r)$ must have at least $L$ 1s to contain an $L$-gon, which implies that $L \leq r$. Then, by definition, $L_0$ is the largest order of polygons that can be formed in ${\tt x}$. Hence, by Lemma~\ref{lm:2}, any ${\tt x} \in \Omega(N,r)$ is $L_0$-distinguishable from all other vectors in $\Omega(N,r)$. Therefore, IP$(N,r,L)$ is uniquely solvable for any $L \ge L_0$.
\end{proof}

To illustrate Theorem~\ref{th:2}, consider the case $N=143=11 \cdot 13$. If $r<11$, then we have $L_0=1$. If $11 \leq r < 13$, then $L_0=11$, and if $13 \leq r \leq M = 71$, then $L_0=13$. Thus, $L=13$ guarantees uniqueness of the inverse solution for any binary vector of length $N=143$.

\begin{remark}
\label{rm:1}
Even if IP($N,r,L$) is not uniquely solvable, some vectors in $\Omega(N,r)$ can be uniquely recovered from the knowledge of $\tilde{x}_m$ with $m=1,2,\ldots L$. For example the following vector
\begin{align*}
&(c) \ N=35, \ r=17, \ S[\Omega(N,r)] = 4,537,567,650 \\
&{\tt x} = (1 0 0 1 0 1 1 0 0 0 0 1 1 1 1 0 1 1 0 0 0 1 1 0 1 0 1 0 0 1 0 0 0 1 1)
\end{align*}
is uniquely recoverable with $L=1$ even though IP($35,17,1$) is not uniquely solvable. The reason is that ${\tt x}$ does not contain any pairs of 5- or 7-gons.
\end{remark}

\begin{remark}
\label{rm:2}
Similar but more complicated results hold for the case when $N$ has two prime divisors, i.e., $N=p^\alpha q^\beta$ and the integers $\alpha,\beta$ can be larger than $1$. Here the difficulty grows from the possibility of an intricate overlapping of different polygon pairs. The case when $N$ has three or more prime divisors is even more difficult to analyze due to the existence of the so-called asymmetrical minimal vanishing sums of $N$-th roots of unity~\cite{conway_1976_1,lam_2000_1}.
\end{remark}

\section{Stability of inversion}
\label{sec:sta}

Even when an inverse problem IP($N,r,L)$ is uniquely solvable, it is not clear whether finding the solution is a numerically stable procedure. Consider the data points in Fig.~\ref{fig:1} for $N=31$, $r=15$ and $L=1$. The large (blue) dot corresponds to the true solution ${\tt x}_{\tt mod}$ and it is the only vector in $\Omega(31,15)$ with $\chi_2({\tt x}, {\tt x}_{\tt mod};1) = 0$, so that the inverse solutions is unique. However, the Fourier-space distance between the first runner-up to the true solution (let us call it ${\tt y}$) and the model is quite small: $\chi_2({\tt y}, {\tt x}_{\tt mod}; 1) \approx 0.0002$. On the other hand, the real-space distance between ${\tt y}$ and ${\tt x}_{\tt mod}$ is not small: $d({\tt y}, {\tt x}_{\rm mod}) = 10$. In other words, $10$ out of $16$ 1s in ${\tt y}$ are in the wrong places. This is an obvious sign of instability. A small change in the DFT data can result in a large change of the inverse solution. 

In this section, we investigate the stability of IP($N,r,L$) numerically. To this end, we have selected some values of $N$ and generated sets of random vectors ${\tt x}_j \in \Omega(N,r)$ for $r=2,3,\ldots M$ and $j=1,2,\ldots J$, where $J$ was chosen to be sufficiently large to obtain statistically significant results. The vectors ${\tt x}_j$ were generated as follows. For a particular random realization, $r$ 1s were randomly placed into $N$ possible positions. Repetitions (identical random realizations) were allowed but occurred very rarely. The number of random realizations in a set, $J$, depended on $r$ and $N$ and varied from $10^4$ to $10^2$ in the most difficult cases such as $N=35$, $r=17$. 

Then, for each ${\tt x}_j$, we have computed the Fourier-space distances~\footnote{In this section, we rely on the $L_2$ norm.} $\chi_2({\tt x}_j, {\tt y}; L)$ to {\em all} vectors ${\tt y} \in \Omega(N,r)$ and the minimum distance between ${\tt x}_j$ and any ${\tt y}$ that is not equal to ${\tt x}_j$. The latter quantity can be formally defined as
\begin{align}
\label{kappa_def}
\kappa({\tt x}_j;L) = \min_{{\tt y}, {\tt y}\neq {\tt x}_j} \chi_2({\tt x}_j, {\tt y}; L) \ , \ \ {\tt x}_j,{\tt y} \in \Omega(N,r) \ .
\end{align}
Note that $\kappa({\tt x}_j; L)$ and its averages defined below in \eqref{kappa_sigma_av} depend implicitly on $N$ and $r$. If $\kappa({\tt x}_j; L) = 0$, the vector ${\tt x}_j$ is not uniquely recoverable with the particular value of $L$. If $\kappa({\tt x}_j; L) > 0$ but is in some sense small, then ${\tt x}_j$ is uniquely recoverable with the given $L$ but the inverse solution is numerically unstable. Generally, as $\kappa({\tt x}_j; L)$ is increased, it becomes easier to recover ${\tt x}_j$, and the precision requirements on the DFT data become less stringent.

\begin{figure*}
\centering
\includegraphics[]{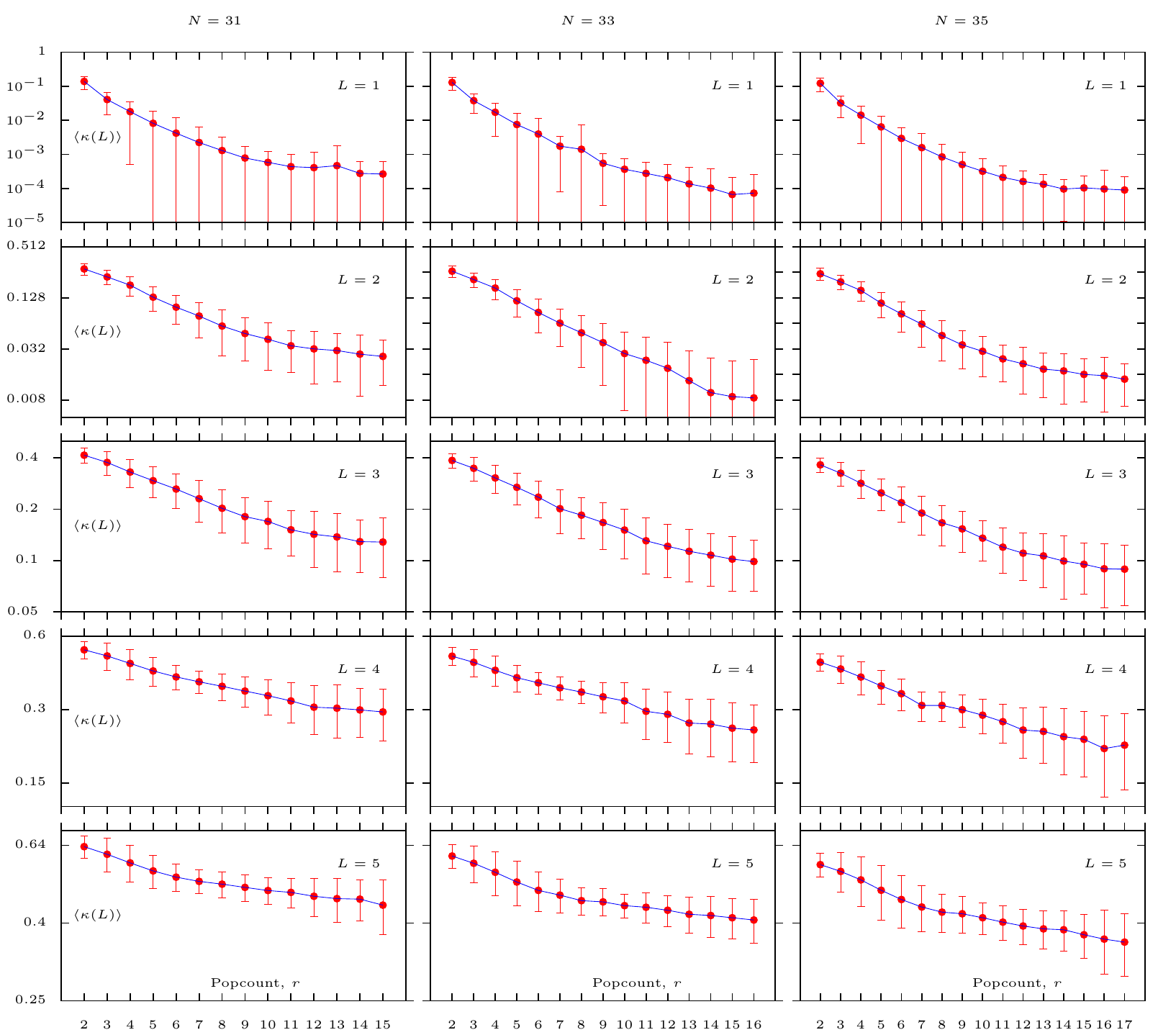}
\caption{\label{fig:4} Averages $\langle \kappa(L) \rangle$ as functions of $r$ for $N=31,33,35$ and $L$ from $1$ to $5$. Error bars are shown at the level of one standard deviation, $\sigma(L)$, as defined in~\eqref{kappa_sigma_av}. The vertical axes in all plots are logarithmic. The trivial case $r=1$ is not shown.}
\end{figure*}

\begin{figure*}
\centering
\includegraphics[]{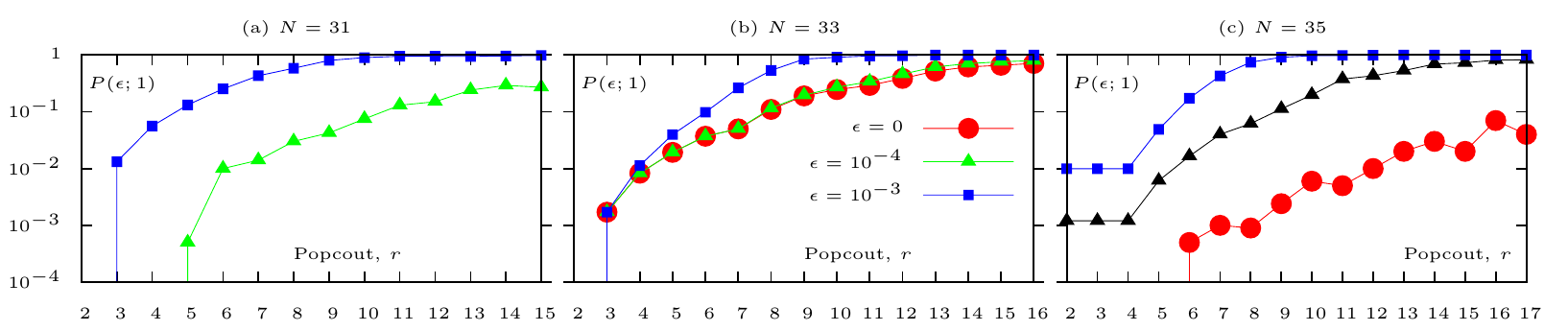}
\caption{\label{fig:5} Cumulative probabilities $P(\epsilon;1)$ for $\epsilon$ as labeled and $N=31$ (a), $N=33$ (b), and $N=35$ (c) as functions of $r$. The trivial case $r=1$ is not shown.}
\end{figure*}

We have also computed the average and the standard deviation of $\kappa({\tt x}_j;L)$ according to
\begin{subequations}
\label{kappa_sigma_av}
\begin{align}
\label{kappa_av}
& \langle \kappa(L) \rangle = \frac{1}{J} \sum_{j=1}^J \kappa({\tt x}_j; L) \ , \\ 
\label{kappa_2_av}
& \langle \kappa^2(L) \rangle = \frac{1}{J} \sum_{j=1}^J \kappa^2({\tt x}_j; L) \ , \\ 
\label{sigma}
& \sigma^2(L) = \langle \kappa^2(L) \rangle - \langle \kappa(L) \rangle^2 \ .
\end{align}
\end{subequations}
These quantities are illustrated in Fig.~\ref{fig:4} for $N=31,\ 33,\ 35$ and $L$ from $1$ to $5$. The data displayed in this figure convey how ``easy'' it is to reconstruct a vector from $\Omega(N,r)$. For example, consider the case $N=35$, $L=3$ and $r=15$. We have for these parameters $\langle \kappa \rangle\approx 0.1$ and $\sigma\approx 0.03$. This means that most vectors in $\Omega(35,15)$ (those within the the $\pm 2\sigma$-interval in the statistical distribution) have the Fourier-space distance to the closest distinct neighbor between $0.04$ and $0.16$. Therefore, if we find a vector ${\tt y} = \Omega(35,15)$ with $\chi_2({\tt x}, {\tt y}; 3) \leq 0.04$, it is likely to be the true solution. We thus conclude that the Fourier data should be specified with the absolute precision of $0.04$ or better.

Not all combinations of $N$, $L$ and $r$ allow such simple considerations. The differences $\langle \kappa\rangle - \sigma$ can be very small or even negative (this is technically possible). In such cases, the lower parts of the error bars in Fig.~\ref{fig:4} are outside of the plot frames. Relevant examples include $N=35$, $L=1$ for $r \ge  5$. In practical terms, the inverse problems with these combinations of $N$, $L$ and $r$ are hard to solve since it is likely that a vector in $\Omega(N,r)$ has a distinct neighbor that is {\em almost} $L$-indistinguishable from itself. To make the inverse problem better conditioned, one can either increase $L$ or increase the precision of the DFT data, assuming the inverse solution is unique.

So far, we have not characterized the statistical distribution of $\kappa({\tt x}_j;L)$. Therefore, the considerations based on the data of Fig.~\ref{fig:4} are qualitative. Computing the full statistical distribution of $\kappa({\tt x}_j;L)$ is a combinatorially hard problem. Instead, we have introduced the cumulative probabilities
\begin{align}
\label{P_def}
P(\epsilon; L) = \frac{1}{J} \sum_{j=1}^J \Theta\left( \epsilon - \kappa({\tt x}_j;L) \right) \ , 
\end{align}
where
\begin{align}
\label{Theta_def}
\Theta(x) = \left\{ 
\begin{array}{ll}
1 \ , & x \ge 0 \\
0 \ , & x < 0
\end{array}\right.
\end{align}
is the step function. Just like $\kappa({\tt x}_j;L)$, $P(\epsilon;L)$ depends implicitly on $N$ and $r$. It gives the (approximate) fraction of vectors in $\Omega(N,r)$ with at least one distinct neighbor that is at least as close in Fourier space (as quantified by the $\chi_2({\tt x},{\tt y};L)$) as $\epsilon$. In particular, $P(0;L)$ gives the fraction of vectors in $\Omega(N,r)$ that  have at least one distinct but $L$-indistinguishable neighbor.

The quantities $P(\epsilon;1)$ are shown in Fig.~\ref{fig:5} for $N=31,\ 33,\ 35$, and $\epsilon=0,\ 10^{-4},\ 10^{-3}$. As expected, $P(0;1)=0$ for $N=31$. This means that, in agreement with Theorem~\ref{th:1}, all vectors in $\Omega(31,r)$ with $r=1,2,\ldots,M$ are $1$-distinguishable from each other. 

Data for larger $L$ are not shown in Fig.~\ref{fig:5} but can be described as follows. In the case $N=31$, we have $P(10^{-3};L) = P(10^{-4};L) = P(0;L) = 0$ for all $r$ considered and $L>1$ (note that  $P(\epsilon;L)$ is a non-decreasing function of $\epsilon$). Therefore, choosing $L=2$ already makes the inverse problem stable in this case. 

In the case $N=33$, $P(10^{-3};L) = P(10^{-4};L) = P(0;L) = 0$ (for all $r$) when $L>2$, whereas choosing $L=2$ is still not sufficient for making the inverse problem stable. Note that, for $L<11$ and $r>11$, $P(\epsilon;L)$ are not exactly zero due to the possibility of forming regular $11$-gons (see Lemma~\ref{lm:1}). However, the values of $P$ are too small to be determined by the statistical approach used here. These cumulative probabilities can also be computed theoretically by counting all the elements ${\tt x}$ that allow regular $11$-gons.

Finally, in the case $N=35$, we can use Theorem~\ref{th:2} to determine whether there are false solutions. For $r<5$, there are no false solutions already with $L=1$. For $5 \leq r < 7$, false solutions are suppressed when $L \ge 5$. For $7\leq r \leq 17$, suppressing false solutions requires $L\ge 7$. However, even if $L<7$, false solutions are statistically rare. Moreover, when $L>1$, the false solutions appear to be the only source of numerical instability. Therefore, the inverse problem with $N=35$ and arbitrary $r$ can be solved in practice even with $L<7$. For example, choosing $L=3$ entails the probability of running into a false solution of the order of $0.01$ or smaller.

\section{Inversion}
\label{sec:inv}

Even if the solution to the inverse problem is unique and we know the DFT data with sufficient precision to guarantee stability, finding the solution is a nontrivial task. In particular, iterative methods that seek to optimize $\chi_p$ are unlikely to work. The reason for this is illustrated in Figs.~\ref{fig:6} and \ref{fig:7}. In Fig.~\ref{fig:6}, we plot the real-space distance $d({\tt x}, {\tt x}_{\tt mod})$ between various vectors ${\tt x} \in \Omega(N,r)$ and the model vector (a) vs the corresponding Fourier-space distance $\chi_2({\tt x}, {\tt x}_{\tt mod}; L)$. It can be seen that the two distances do not correlate well. It is possible to have small $\chi_2$ for a large $d$ and small $d$ with a large $\chi_2$. Any optimization technique that works directly with the real-space vectors ${\tt x} \in \Omega(N,r)$ and tries to reduce the error $\chi_2$ iteratively is therefore not likely to solve the problem: the iterations will inevitably end up in one of the many local minima of $\chi_2$, which, in real space, are still very far from the true solution. The difficulty is not removed if we use $\chi_\infty$ instead of $\chi_2$, as is illustrated in Fig.~\ref{fig:7}.

\begin{figure}
\centering
\includegraphics[]{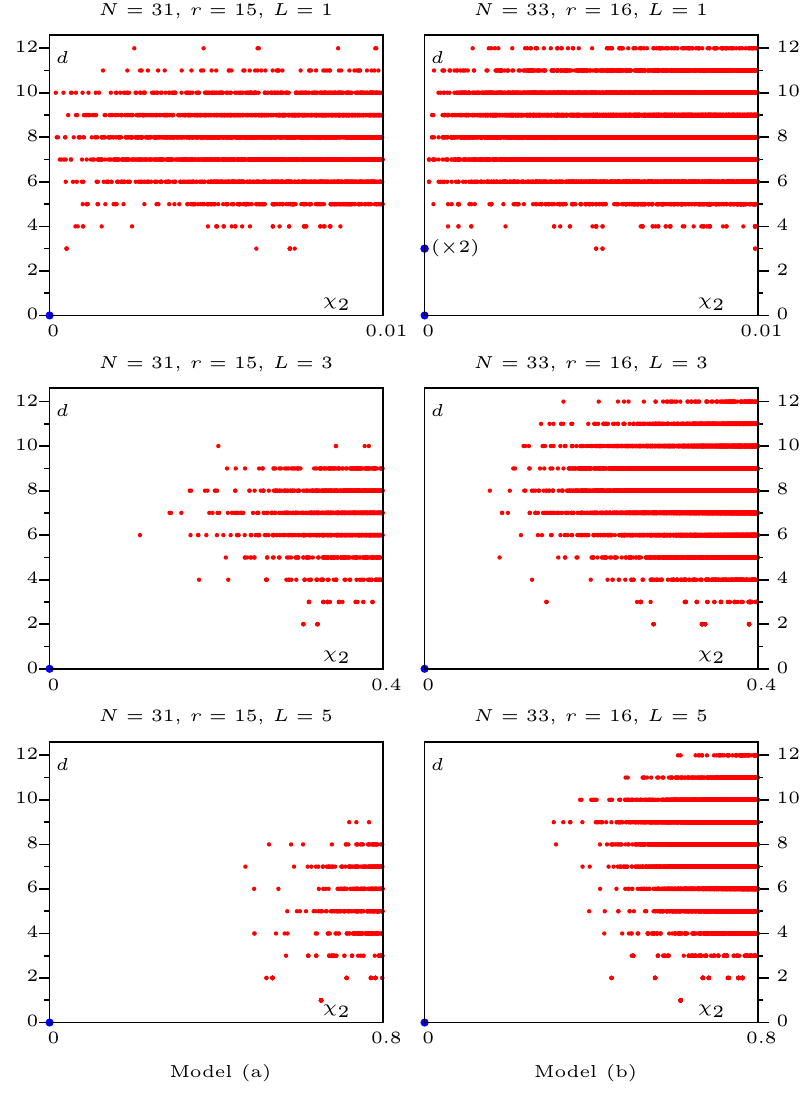}
\caption{\label{fig:6} Real-space distance $d({\tt x}, {\tt x}_{\rm mod})$ vs Fourier-space distance $\chi_2({\tt x}, {\tt x}_{\tt mod}; L)$ for the same data points as in Fig.~\ref{fig:1}. Here ${\tt x}_{\tt mod}$ is the model vector (a). The symbol $(\times 2)$ indicates that the data point corresponds to two distinct false solutions. Both false solutions have the same real-space distance to the model, $d=3$.}
\end{figure}

\begin{figure}
\centering
\includegraphics[]{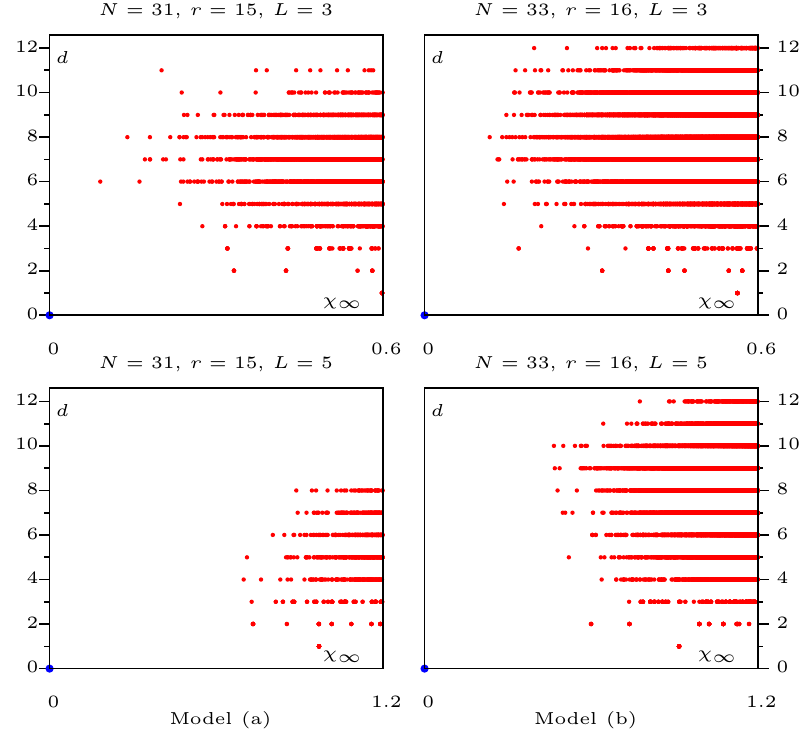}
\caption{\label{fig:7} Same as in Fig.~\ref{fig:6} but for $\chi_\infty$. Data for $L=1$ are not shown since, in this case, $\chi_2$ and $\chi_\infty$ coincide.}
\end{figure}

\begin{remark}
\label{rm:3}
If we could find a vector ${\tt y}$ such that $d({\tt y}, {\tt x}_{\rm mod}; L) = 1$, 
we would be able to find, starting from this result, the true solution (assuming it is unique) in at most $r(N-r)$ deterministic steps. But as can be seen from the data of Figs.~\ref{fig:6} and \ref{fig:7}, small $\chi_p$ does not imply small $d$. It is almost as hard to find a vector ${\tt y}$ with the above property as it is to find ${\tt x}_{\rm mod}$ itself.
\end{remark}

Thus, we have encountered a somewhat paradoxical result. The inverse problem can be linear and have a unique and stable solution; yet, any method that updates iteratively ${\tt x} \in \Omega(N,r)$ in an attempt to minimize $\chi_p$ is not expected to work. The mathematical reason for this difficulty is that the forward DFT maps a discrete set $\Omega(N,r)$ onto a continuous linear space of Fourier data; not every point in the data space is an image of an element in $\Omega(N,r)$. Iterative methods that work well for continuous maps do not work here.

However, we describe below two approaches to solving the inverse problem that work reasonably well. One is a combinatorial approach to solve the NP-hard problem, and the other is based on non-convex optimization of a continuous map; the complexity of this method is polynomial (if it converges). The combinatorial method does not seek a sequence of vectors ${\tt x}$ with monotonously decreasing value of $\chi_p$. Rather, it tests all vectors in some real-space vicinity of an initial guess (defined below) and either finds one with $\chi_p$ below a pre-determined threshold or finds the vector with the smallest $\chi_p$ over all vectors in this vicinity. The optimization method does not work with $\chi_p$ directly but rather relaxes the assumption of binarity first and defines a functional that quantifies how close a general vector is to being binary. The functional is non-convex but a fourth-order polynomial; along any search direction it can have no more than two minima. This, together with stochastic jumps strategy, allows one to search efficiently for the deepest local minimum. The Supplemental Material contains a computational package that implements these two methods. The package is applicable to generic data consisting of several DFT coefficients of the unknown vector (to be supplied by the user), and includes detailed documentation and examples.

We start by describing the set up of the numerical inverse problem in more detail. Let $\tilde{\varphi}_m$, $1 \leq m \leq L$ be a set of DFT coefficients, which are given as input to the numerical inversion. This set can be expanded by using the relations $\tilde{\varphi}_0 = r$ and $\tilde{\varphi}_{-m} = \tilde{\varphi}_m^*$. For the algorithms described below, it is not important how $\tilde{\varphi}_m$ were generated. These can be exact DFT coefficients of some binary vector, or noisy approximations to such coefficients, or just an arbitrary set of complex numbers. In the numerical package that accompanies this paper, $\tilde{\varphi}_m$ with $1\leq m \leq L$ are supplied as numbers in an input file, and several examples of such ``forward data'' corresponding to some model binary vectors (with various degrees of precision) are provided for testing. We seek a binary vector ${\tt s}$ (the solution) such that the Fourier-space distance 
\begin{align}
\label{stop_cond_1}
\chi_p({\tt s}, \upvarphi; L) \leq \epsilon \ ,
\end{align}
where $\epsilon$ is a pre-determined small constant, or, if \eqref{stop_cond_1} can not be met, we seek the minimizer of $\chi_p({\tt x}, \upvarphi; L)$ over a sufficiently large set of ${\tt x}$. These conditions are referred to below as the first and second stop conditions. We emphasize that ${\tt s}$ is a numerical solution; it may or may not be equal to the model vector ${\tt x}_{\rm mod}$ that was used to generate the data $\tilde{\varphi}_m$. It might also be the case that a model vector ${\tt x}_{\rm mod}$ was not even used to generate $\tilde{\varphi}_m$.

We also note the following three points. First, computation of $\chi_p({\tt x}, \upvarphi; L)$ according to \eqref{chi_def} does not require the knowledge of the full vector $\upvarphi$; the set of known DFT coefficients $\tilde{\varphi}_m$ with $|m| \leq L$ is sufficient. Second, if $\tilde{\varphi}_m$ do not correspond to some binary vector with given $N$ and $r$ precisely, then $\chi_p({\tt s}, \upvarphi; L)$ can not be arbitrarily small and the first stop condition can not be met if the selected $\epsilon$ is too small. However, the second stop condition can still provide the correct solution. Under the circumstances, increasing $\epsilon$ can make the numerical inversion more efficient since the first stop condition, if achievable, is generally met much faster than the second condition. Finally, the achieved distance $\chi_p({\tt s}, \upvarphi; L)$ is an indicator of how reliable the obtained solution is. This quantity can be compared to the data similar to those shown in Fig.~\ref{fig:4} but computed for the specific values of $N$, $r$ and $L$ that were used in a reconstruction. If $\chi_p({\tt s}, \upvarphi; L) < \langle \kappa(L) \rangle - 2 \sigma(L)$, one can be reasonably confident that ${\tt s}$ is the true solution.

In both approaches described below, we start with an initial guess ${\tt g} = (g_1,\ldots,g_N)$, which is computed as the low-path filtered inverse DFT of the forward data, viz,
\begin{align}
\label{guess}
g_n = \frac{1}{N}\sum_{m=-L}^L \tilde{\varphi}_m e^{-\cu \dft n m} \ , 
\end{align}
where we have used all the available data points $\tilde{\varphi}_m$ including $\tilde{\varphi}_0 = r$ and $\tilde{\varphi}_{-m}=\tilde{\varphi}_m^*$.

\subsection{Combinatorial algorithm}
\label{sec:inv.comb}

For relatively small values of $N$ (i.e., $\lesssim 60$), we can use the following approach. We start with the 
the initial guess \eqref{guess} and round off the $r$ largest elements of ${\tt g}$ to $1$ and the rest to $0$. We refer to this procedure as to ``roughening'' and write ${\tt b} = {\mathcal R}[{\tt g}]$, where ${\mathcal R}[\cdot]$ is the roughening operator. The result is a binary initial guess ${\tt b}$ with the correct length and popcount, so that ${\tt b} \in \Omega(N,r)$. However, ${\tt b}$ is not expected to be consistent with the data. To be sure, we always check whether ${\tt b}$ satisfies the first stop condition \eqref{stop_cond_1}. If this is not so, as is usually the case, we invoke a recursive procedure that builds all vectors ${\tt x}$ such that $d({\tt x}, {\tt b}) = 1$, then all vectors such that $d({\tt x}, {\tt b}) = 2$, etc. The algorithm stops when either a vector ${\tt x}$ satisfying \eqref{stop_cond_1} is found (first stop condition) or all vectors ${\tt x}$ such that $d({\tt x}, {\tt b}) \leq d_{\tt max}$, where $d_{\tt max}$ is the maximum depth of recursion, have been tested (second stop condition). If we select $d_{\tt max} = r$, then all vectors in $\Omega(N,r)$ are tested, but such exhaustive search is rarely necessary. In the computational package accompanying this paper, the default value is $d_{\tt max} = \min(10,r)$, and it can be tuned by the user if necessary. 

We now adduce the pseudo-code for the combinatorial algorithm.

\begin{algorithmic}[1]

\STATE Compute ${\tt g}$ according to \eqref{guess} and ${\tt b}$ as ${\tt b} = {\mathcal R}[{\tt g}]$;

\FOR{$d=1$ \TO $d=d_{\tt max}$}

\STATE Initialize $\chi_{\rm min} \leftarrow 10^9$;

\FOR{$i=1$ \TO $r!/d!(r-d)!$}

\STATE{\label{Step:A} Select a new unique combination of $d$ 1s out of $r$ 1s in ${\tt b}$;}

\FOR{$j=1$ \TO $(N-r)!/d!(N-r-d)!$}

\STATE{\label{Step:B} Select a new unique combination of $d$ 0s out of $N-r$ 0s in ${\tt b}$;}

\STATE{Swap 1s selected in Line~\ref{Step:A} with 0s selected in Line~\ref{Step:B} and leave other 1s and 0s in ${\tt b}$ unchanged; assign the resulting values to ${\tt x}$;}

\STATE{Compute $\chi \equiv \chi_p({\tt x}, \upvarphi; L)$;}

\IF{$\chi \leq \epsilon$}

\STATE{Solution found using Stop Condition 1. Assign ${\tt s} \leftarrow {\tt x}$ and exit;}

\ELSE 

\IF{$\chi < \chi_{\rm min}$}

\STATE{$\chi_{\rm min} \leftarrow \chi$;}

\STATE{${\tt x}_{\rm min} \leftarrow {\tt x}$;}

\ENDIF
\ENDIF

\ENDFOR
\ENDFOR
\ENDFOR

\STATE{Solution found using Stop Condition 2. Assign ${\tt s} \leftarrow {\tt x}_{\rm min}$;}

\PRINT{Achieved distance to data, $\chi_{\rm min}$;}

\PRINT{Recursion depth at which solution was found, $d_{\rm min}$;}

\end{algorithmic}

The two internal loops, which go over all unique $d$-combinations of $r$ 1s and $N-r$ 0s in ${\tt b}$ can be defined recursively. The problem here is that of generation of all $d$-combinations of a set of $r$ or $N-r$ distinguishable (labeled) objects. To construct all $d$-combinations of 1s, we define the recursive procedure ${\rm next\_1}(k,l)$, where $k$ is the number of 1s already selected and $l$ is the sequential number of the previous 1 selected. Here we assume that all 1s in ${\tt b}$ are numbered sequentially from $1$ to $r$ (one can imagine a label attached to each 1). Similarly, 0s can be numbered sequentially from $1$ to $N-r$. The procedure is first invoked as ${\rm next\_1}(0,0)$. For generic $k$ and $l$, ${\rm next\_1}(k,l)$ goes over all available positions $i$ for the next, $(k+1)$-th, 1. These positions, $i$, are in the range $l < i \leq r-d + (k+1)$. For each $i$, the procedure includes the corresponding 1 into a new combination and then invokes itself as ${\rm next\_1}(k+1,i)$. Importantly, the new 1 in a combination is always selected ``to the right'' of the previous selection. Once a full $d$-combination of 1s is constructed, we have $k=d$, and ${\rm next\_1}(d,*)$ invokes another recursive procedure ${\rm next\_0}(0,0)$, which builds all unique $d$-combination of 0s in a similar manner. Once two unique $d$-combinations have been constructed, the 0s and 1s in ${\tt b}$ are swapped and $\chi_p$ is computed. 

The computational complexity of the described algorithm scales as $O(L C)$, where
\begin{align}
\label{C_def}
C = \sum_{d=1}^{d_{\tt max}} \frac{r!(N-r)!}{(d!)^2(r-d)!(N-r-d)!}  \ .
\end{align}
Every term in this summation is the product of the number of unique $d$-combinations of $r$ 1s times the number of unique $d$-combinations of $N-r$ 0s, which are indicated in Lines 4 and 6 of the pseudo-code. This product is equal to the number of all unique vectors ${\tt x}$ tested at the recursion depth $d$. It is also equal to the number of all vectors in $\Omega(N,r)$ whose real-space distance to ${\tt b}$ is $d$. The factor of $L$ in $O(LC)$ accounts for the overhead of computing $\chi_p$ for every ${\tt x}$ tested. The complexity $C$ is illustrated in Fig.~\ref{fig:8} for several values of $N$ and $r$ as a function of $d_{\tt max}$. It can be seen that $C$ is much smaller than the complexity of exhaustive search [that is, testing all vectors in $\Omega(N,r)$] if $d_{\tt max}$ is significantly smaller than $r$. For example, for $N=61$, the complexity is still manageable and well below that of exhaustive search for $d_{\tt max} \lesssim 10$. 

Still, the complexity \eqref{C_def} strongly depends on $d_{\tt max}$, and the choice of this parameter in practical computations is not trivial. Let us assume that the data $\tilde{\varphi}_m$ correspond to some binary vector ${\tt x}_{\tt mod}$ with good precision. That is, the stop condition $\chi_p({\tt x}, \upvarphi; L) \leq \epsilon$ is met for ${\tt x} = {\tt s} = {\tt x}_{\tt mod}$ and is not met for any other vector in $\Omega(N,r)$. Then, to guarantee finding the true solution, the recursion must run to $d_{\tt max} = d({\tt b}, {\tt x}_{\tt mod})$. The distance from the initial guess to the model is bounded from above by $r$, but otherwise is not known {\em a priori}. This underscores the importance of making the initial guess ${\tt b}$ as close to the true solution as possible. In Fig.~\ref{fig:9}, we plot the average values $\langle d({\tt b}, {\tt x}_{\tt mod}) \rangle$ and the corresponding standard deviations for $10^6$ random model vectors ${\tt x}_{\tt mod}$ of the length $N=61$ and different values of $L$ as functions of the popcount $r$. It can be seen that the typical values of $d({\tt b}, {\tt x}_{\tt mod})$ are significantly smaller than $r$ and, as expected, decrease with $L$. We emphasize that the results shown in Fig.~\ref{fig:9} pertain to random model vectors without any particular structure. The vectors in which 1s are grouped, i.e., representing a single rectangular pulse or a few such pulses tend to have smaller $d({\tt b}, {\tt x}_{\tt mod})$. Therefore, the data of Fig.~\ref{fig:9} or similar easily-computable data sets can be useful for estimating the values of $d_{\tt max}$ that are sufficient for obtaining the solution.

\begin{figure}
\centering
\includegraphics[]{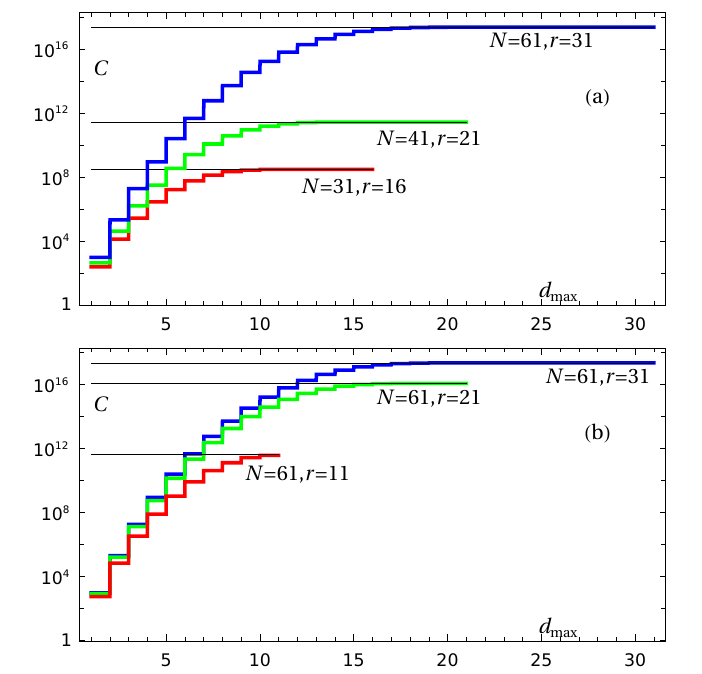}
\caption{\label{fig:8} Computational complexity $C$ \eqref{C_def} as a function of the maximum recursion depth $d_{\tt max}$ for several values of $N$ and $r$, as labeled. Thin black lines indicate the computational complexity of the exhaustive search, i.e., testing all vectors in $\Omega(N,r)$. Panel (a) shows the dependence for several values of $N$ and the maximum value of $r$ that is allowed for each $N$. Panel (b) shows the dependence for $N=61$ and several allowed values of $r$ for this $N$. The blue lines for $N=61,r=31$ in the two panels are identical.}
\end{figure}

\begin{figure*}
\centering
\includegraphics[]{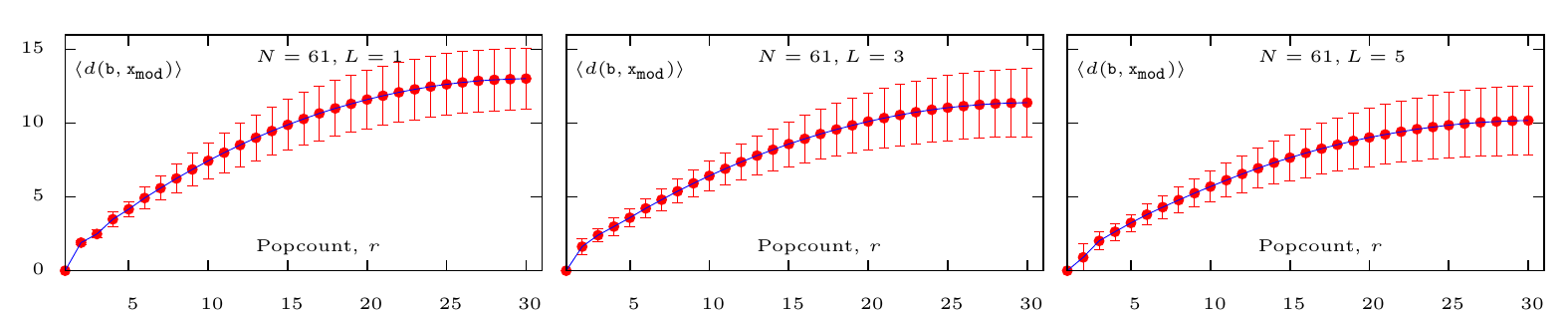}
\caption{\label{fig:9} Average distance between random models ${\tt x}_{\tt mod}$ and the corresponding (roughened) initial guess ${\tt b}$, $\langle d({\tt b}, {\tt x}_{\tt mod}) \rangle$, for $10^4$ random vectors ${\tt x}_{\tt mod}$ of length $N=61$ each, shown as functions of $r$ for different values of $L$. Error bars are drawn at the level of one standard deviation. } 
\end{figure*}
	
The algorithm described in this subsection was able to find all three model vectors (a), (b) and (c) defined above with $L=1$ by using the stop condition $\chi_2 \leq \epsilon = 10^{-5}$ (for the models (a) and (b) $\epsilon=10^{-4}$ is sufficient). We note that IP($33,16,1$) is not uniquely solvable but the algorithm has found the true solution anyway. This occurred by chance; the search could have ended up with one of the two false solutions. Also, IP($35,17,1$) is not uniquely solvable in general but, as was mentioned in Remark~\ref{rm:1}, the particular model vector (c) is uniquely recoverable with $L=1$. In Fig.~\ref{fig:10}, we show the intermediate reconstruction steps for model (c) using $L=1$ and $L=5$. It can be seen that the band-limited initial guess ${\tt g}$ does not have a well defined structure at both $L=1$ and $L=5$. The ``roughened'' binary initial guess ${\tt b}$ has the distance $d({\tt b}, {\tt x}_{\rm mod})=8$ for $L=1$ and $d({\tt b}, {\tt x}_{\rm mod}) = 4$ for $L=5$. 

For $L=1$, the combinatorial inversion took 8, 40, and 209 seconds to recover models (a), (b), and (c), respectively. In contrast, MATLAB's {\tt intlinprog} function recovered these models in 60, 518, and 3006 seconds. See the User Guide of the computational package (in Supplemental Material) for additional benchmarks and examples.

\begin{figure}
\centering
\includegraphics[]{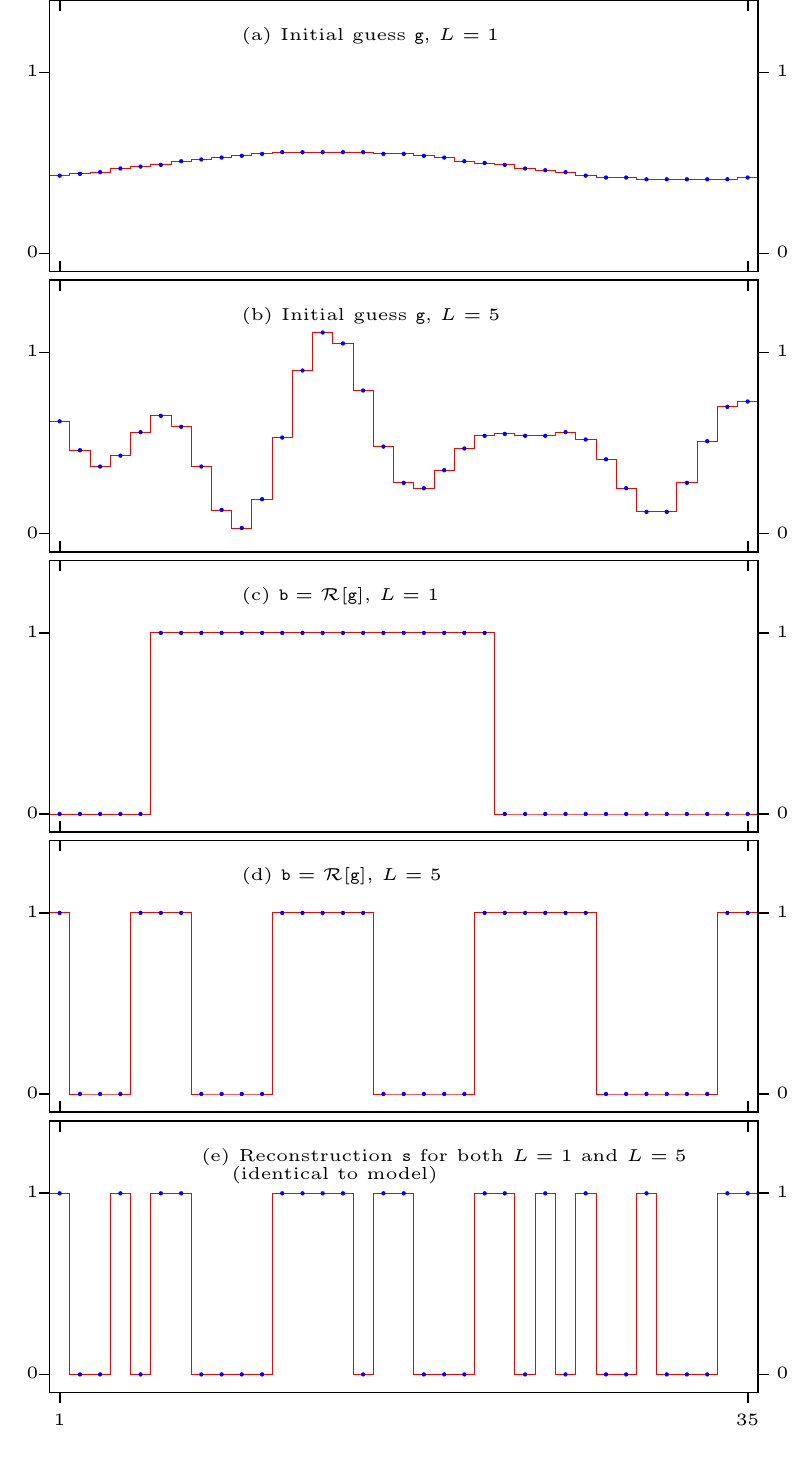}
\caption{\label{fig:10} Intermediate steps in the reconstruction of model (c) with $N=35$, $r=17$ and $L=5$. Initial guesses for $L=1$ ${\tt g}$ (a) and $L=5$ (b); initial guess after roughening, ${\tt b}=R[{\tt g}]$, for $L=1$ (c) and $L=5$ (d); and the reconstruction ${\tt s}$ (e), the same for both values of $L$ and identical to the model.}
\end{figure}

\subsection{Non-convex optimization}
\label{sec:inv.opt}

When $N$ increases past $\sim 60$, the combinatorial algorithm of the previous subsection becomes impractical. We now describe an alternative approach that relies on the continuity of the conventional (unrestricted and unconstrained) DFT to define an iterative scheme that does not involve combinatorial complexity. Let ${\tt v} = (v_1, \ldots, v_N)$ be a general real-valued unconstrained vector of length $N$. We define the cost function $F[{\tt v}]$ that quantifies how close ${\tt v}$ is to a binary vector as
\begin{align}
\label{F_def}
F[{\tt v}] = \sum_{n=1}^N v_n^2 (v_n - 1)^2 \ .
\end{align}
We then seek to minimize $F[{\tt v}]$ while keeping ${\tt v}$ consistent with the data. To this end, we start with the band-limited initial guess ${\tt v} = {\tt g}$ [defined in \eqref{guess}] and update ${\tt v}$ iteratively according to
\begin{subequations}
\label{iter}

\begin{align}
\label{iter_step}
{\tt v} \longleftarrow {\tt v} + {\tt q} \ ,
\end{align}
where ${\tt q} = (q_1, \ldots, q_N)$ is of the form
\begin{align}
\label{q_def}
q_n = \sum_{m=L+1}^M \left[c_m \cos (\dft mn) + s_m \sin (\dft m n) \right] \ .
\end{align}
\end{subequations}
Here $c_m$ and $s_m$ are some real-valued coefficients, which must be determined at each iteration step independently. It can be seen that the vector ${\tt v}$ updated according to \eqref{iter} remains consistent with the data. To determine ${\tt q}$, we use the steepest descent approach. The derivatives of $F[{\tt v} + {\tt q}]$ with respect to the set of coefficients $c_m, s_m$ evaluated at ${\tt q}=0$ are given by
\begin{subequations}
\label{dir_cm}
\begin{align}
F_m^{(c)}[{\tt v}] \equiv \left. \frac{F[{\tt v} + {\tt q}]}{\partial c_m} \right|_{{\tt q}=0} = 2\sum_{n=1}^N u_n \cos(\dft m n) \ , \\
F_m^{(s)}[{\tt v}] \equiv \left. \frac{F[{\tt v} + {\tt q}]}{\partial s_m} \right|_{{\tt q}=0} = 2\sum_{n=1}^N u_n \sin(\dft m n) \ ,
\end{align}
\end{subequations}
where
\begin{align}
\label{u_def}
u_n = v_n(v_n - 1)( 2 v_n -1 ) \ .
\end{align}
Therefore, we have determined the gradient of $F[{\tt v}]$ with respect to the unknown coefficients $c_m$ and $s_m$. Denoting the gradient vector by ${\tt p} = (p_1,\ldots,p_N)$, we have for the components:
\begin{align}
\label{p_def}
p_n = \sum_{m=L+1}^M \left[ F_m^{(c)}[{\tt v}]\cos(\dft m n) + F_m^{(s)}[{\tt v}]\sin(\dft m n) \right] \ .
\end{align}
According to the general strategy of steepest descent, we select ${\tt q} = \alpha {\tt p}$ in the iteration step \eqref{iter_step}, where $\alpha$ is a scalar to be determined. By direct substitution, we find that 
\begin{subequations}
\label{F_der_alpha}
\begin{align}
\label{f_alpha}
f(\alpha) \equiv \frac{\partial F[{\tt v}+\alpha{\tt p}]}{2\partial \alpha} = A_0 + A_1 \alpha + A_2 \alpha^2 + A_3 \alpha^3 \ ,
\end{align}
where
\begin{align}
& A_0 = \sum_{n=1}^N v_n(v_n - 1)(2v_n - 1) p_n = \sum_{n=1}^N u_n p_n \ , \\
& A_1 = \sum_{n=1}^N [2v_n (v_n - 1) + (2v_n - 1)^2] p_n^2 \ , \\
& A_2 = 3\sum_{n=1}^N (2v_n - 1) p_n^3 \ , \ \ A_3 = 2\sum_{n=1}^N p_n^4 \ .
\end{align}
\end{subequations}
The function $F[{\tt v}+\alpha{\tt p}]$ is a fourth-order polynomial in $\alpha$ with a positive senior coefficient. Consequently, either it has one real minimum or two minima and one maximum. These special values of $\alpha$ can be determined by solving the cubic equation $f(\alpha)=0$. If the cubic has a single real root $\alpha_1$, we set $\alpha=\alpha_1$. If there are three real roots $\alpha_1 \leq \alpha_2 \leq \alpha_3$, then $F[{\tt v}+\alpha{\tt p}]$ has a maximum at $\alpha_2$ and two minima at $\alpha_1$ and $\alpha_3$. To avoid ``jumping'' over a maximum, we set $\alpha = \alpha_1$ if $\alpha_2>0$ and $\alpha=\alpha_3$ otherwise. This completely defines each iteration step and guarantees that $F[{\tt v}]$ is decreased in each iteration until a local minimum of $F[{\tt v}]$ is reached. The condition for the local minimum is
\begin{align}
\label{loc_min_cond}
& \sum_{n=1}^N u_n \cos(\dft mn) = \sum_{n=1}^N u_n \sin(\dft mn) = 0 \\
& \nonumber \hspace*{5cm} {\rm for}  \ L<m\leq M  \ ,
\end{align} 
where $u_n$ are defined in \eqref{u_def}. The steepest descent algorithm described here can find a local minimum satisfying the above condition efficiently and with high precision. Note that, in most iteration steps, the cubic has only one real root. The problem is however that $F[{\tt v}]$ has many local minima. It is unlikely that any given local minimum is close to the true solution.

To overcome the above difficulty, we can adopt the following stochastic approach. Once a local minimum $\bar{\tt v}_k$ is found ($k$ labels different local minima), we compute $\chi_p({\mathcal R}[\bar{\tt v}_k], \upvarphi; L)$
and check whether the first stop condition \ref{stop_cond_1} has been satisfied. If so, we have found the solution ${\tt s} = {\mathcal R}[\bar{\tt v}_k]$. If not, we start from the deepest local minimum found so far, $\bar{\tt v}_{\tt min}$, and perturb it by adding a vector ${\tt q}_{\tt rand}$ of the form \eqref{q_def} with a given length and random direction (in the space of coefficients $c_m$, $s_m$). In this way, we select a new initial guess for ${\tt v}$, from which we run the steepest descent algorithm again. Depending on the length and direction of ${\tt q}_{\tt rand}$, we will end up either in the same or a different local minimum. To avoid being trapped in the same local minimum, we gradually increase the length of random jumps.

Finally, a functionality is provided in the package to define the stop condition in terms of $F[{\tt v}]$ itself rather than in terms of $\chi_p$. Since the latter approach does not require computation of $\chi_p$, it is slightly faster. The default setting is however to use \eqref{stop_cond_1}. Finally, the algorithm stops after a certain amount of local minima has been found and reports the deepest local minimum (second stop condition).

A simplified pseudo-code for the non-convex optimization algorithm is presented below. Some parts of the algorithm are stated only briefly and some additional logic, which is needed to avoid mistakes and improve precision and speed, is not shown to avoid excessive complexity. However, conceptually important steps of the algorithm are all shown.

\begin{algorithmic}[1]

\STATE{Define constants: small precision-related constant $\delta$, random jump step length (initial value $S=1$), number of iterations before $S$ is incremented $i_{\tt inc}$ (typical value $i_{\tt inc}=1000$), and the increment of $S$, $\Delta$. These and other constants are initialized from input parameter files.}
 
\STATE{Compute ${\tt g}$ according to \eqref{guess};}

\STATE{Initialize ${\tt v} \leftarrow {\tt g}$;}

\STATE{Initialize $\chi_{\rm min} \leftarrow 10^9$;}

\FOR{$i=1$ \TO $i_{\tt max}$}

\STATE{Compute $F_0 = F[{\tt v}]$;}

\STATE{Initialize ${\rm test \leftarrow True}$;}

\WHILE{{\rm test}}

\STATE{Compute steepest descent direction ${\tt p} = {\tt p}[{\tt v}]$ according to \eqref{dir_cm}-\eqref{p_def};}

\STATE{Compute the length of the steepest descent step $\alpha$ by solving the cubic \eqref{f_alpha} and choosing the distance to the closest minimum along the search direction;}

\STATE{Make the steepest descent step ${\tt v} \leftarrow {\tt v} + \alpha {\tt p}$;}

\STATE{Compute $F=F[{\tt v}]$;}

\STATE{Evaluate $t \leftarrow F_0/F - 1$;}

\IF{$t > \delta$}

\STATE{$F_0 \leftarrow F$;}

\ELSE

\STATE{${\rm test = False}$;}

\ENDIF

\ENDWHILE

\STATE{Check the local minimum condition \eqref{loc_min_cond}. If local minimum can not be reached, report error and exit;}

\STATE{Compute ${\tt x} = {\mathcal R}[{\tt v}]$;}

\STATE{Compute $\chi_p = \chi_p({\tt x}, \upvarphi; L)$;}

\IF{$\chi_p < \epsilon$}

\STATE{Stop condition 1. Solution has been found. Assign ${\tt s} \leftarrow {\tt x}$ and exit;}

\ENDIF

\IF{$\chi_p < \chi_{\tt min}$}

\STATE{$\chi_{\tt min} \leftarrow \chi_p$;}

\STATE{${\tt v}_{\tt min} \leftarrow  {\tt v}$;}

\ENDIF

\IF{$\mod(i,i_{\tt inc})=0$}

\STATE{Increase the random jump step length as $S \leftarrow S + \Delta$;}

\ENDIF 

\STATE{Construct a vector ${\tt q}$ of the form \eqref{q_def} of unit length and random direction in the space of $c_m,s_m$, $L<m\leq M$;}

\STATE{Make random jump ${\tt v} \leftarrow {\tt v}_{\tt min} + S {\tt q}$;}

\ENDFOR
	
\STATE{Solution found using Stop Condition 2. Assign ${\tt s} \leftarrow {\mathcal R}[{\tt v}_{\rm min}]$;}

\PRINT{Achieved distance to data, $\chi_{\rm min}$;}

\end{algorithmic}

Some variations of this algorithm, as implemented in the computational package, include the following. (i) If local minimum in Line 20 is not verified with required precision, the code will attempt to approach the minimum closer by moving along several deterministic directions parallel to the axes or multi-dimensional diagonals in the space of $c_m, s_m$. Only if after several attempts the local minimum could not be verified, the code reports an error. This error occurs (due to round-off errors) extremely rarely and can be fixed by changing the precision-related constants. Note that the functional $F[{\tt v}]$ only has local minima and maxima but not saddle points. (ii) The random jumps in Line 34 can originate not only from the deepest local minim found (default) but also from the initial guess, i.e., ${\tt v} \leftarrow {\tt g} + S {\tt q}$ or from the local minimum most recently found, ${\tt v} \leftarrow {\tt v} + S {\tt q}$. In the latter case, the search for solution is a random walk over the local minima of $F[{\tt v}]$. In the first (default) case, the random walk is restricted so that the next stop always has a smaller $\chi_p$. (iii) The algorithm can use a different formulation of the stop condition 1, which is based on smallness of $F$ rather than $\chi_p$. (iv) Finally note that the current implementation of the codes relies on the $L_2$ norm. This can be changed to $L_1$ or $L_\infty$ norms as explained in the User Guide.

\begin{figure}
\centering
\includegraphics[]{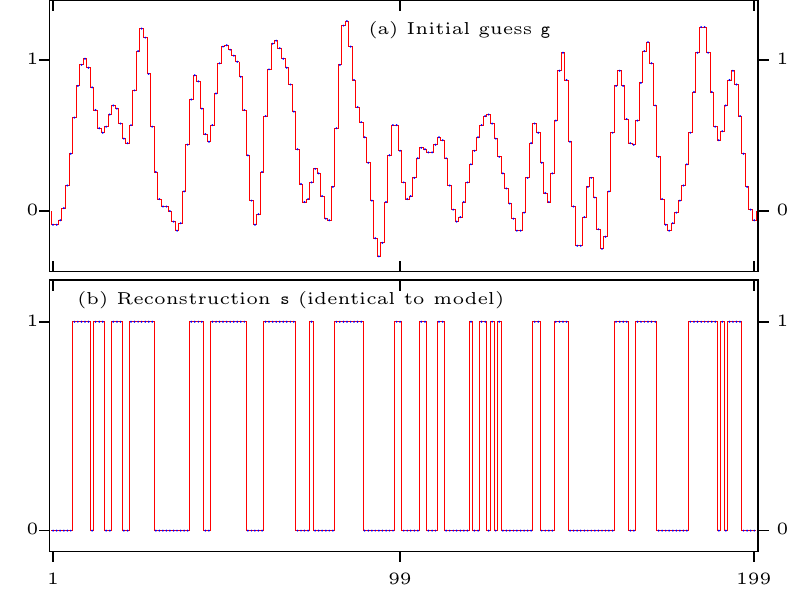}
\caption{\label{fig:10_N=199} Reconstruction with $N=199$, $r=90$ and $L=29$. Initial guess ${\tt g}$ (a) and the reconstruction ${\tt s}$ (identical to the model ${\tt x}_{\tt mod}$) (b).}
\end{figure}

To illustrate the method, we have reconstructed a vector of the length $N=199$ with $r=90$ using $L=29$ (so that the ratio $L/M=29/99 \approx 1/3$) and the stop condition $\chi_2 \leq  \epsilon=10^{-5}$ (if the DFT data are rounded-off to 4 significant figures, solution is still found fast with $\epsilon=0.002$). The initial guess ${\tt g}$ for this simulation is shown in Fig.~\ref{fig:10_N=199}(a) and the reconstruction (identical to the model) in Fig.~\ref{fig:10_N=199}(b). If we apply the roughening operation directly to the initial guess, ${\tt b} = {\mathcal R}[{\tt g}]$, we would obtain $d({\tt b}, {\tt x}_{\tt mod}) = 8$. Although the roughened initial guess and the model are not too far apart in real space, the complexity of finding the solution by the combinatorial method of Section~\ref{sec:inv.comb} is $O(10^{24})$ operations. This is beyond the reach for any modern computer. The optimization method of this subsection however finds the solution in under one minute. In general, however, it is difficult to state a general condition of convergence to the true solution or estimate the computational complexity of the method described in this subsection.  We note that the method has several adjustable parameters and changing them can help in situations when convergence appears to be slow. Additional details are provided in the User Guide of the computational package.
 
\section{Discussion and conclusions}
\label{sec:disc}

We have shown that binary compositional constraints are powerful priors that allow one to obtain a well-pronounced super-resolution effect. In particular, we have proved that a band limited DFT of a binary vector of length $N$ is uniquely invertible from the knowledge of just two complex DFT coefficients -- the zeroth and the first -- if $N$ is prime. If $N$ has two prime factors, then Theorem~\ref{th:2} tells us how many additional DFT coefficients must be known to guarantee uniqueness. The above result can be useful if applied to analysis of two-dimensional images.

Although Theorems~\ref{th:1} and \ref{th:2} establish the sufficient condition for invertibility assuming only that the unknown vector is consistent with the data, many (but not all) binary vectors are uniquely recoverable even if conditions of the Theorems do not hold.

We have further investigated stability of inversion and conditions under which reconstructing a binary vector from a limited set of DFT coefficients is practically feasible. Preliminary numerical data indicate that stable reconstructions can be obtained when about $1/3$ of all DFT coefficients are known. This entails an approximately three-fold super-resolution effect.

Although binarity is a powerful constraint, devising practical reconstruction algorithms is a nontrivial task. We provided and demonstrated two such algorithms in the paper. The first (combinatorial) algorithm is guaranteed to find a solution of the NP-hard problem if run consistently but might become prohibitive due to large computational time. Still, it involves fewer or much fewer operations than exhaustive search. This approach is applicable to vectors with $N \lesssim 100$. For longer vectors, we have developed an algorithm based on optimization of the non-convex cost function $F[\cdot]$ \eqref{F_def}. The local minima of this function can be easily found using the steepest descent approach. However, not every local minimum of a non-convex function is a global minimum. To overcome this difficulty, we have introduced random perturbations, which allow one to sample many local minima and finally find one with sufficient depth, which then coincides with the global minimum. We note that convexization of the inverse problem, that is, finding a relevant convex cost function, proved to be difficult and is likely impossible.
 
Although we have obtained numerical reconstructions for vectors up to $N=199$ with about $1/3$ of DFT coefficients considered to be known, it is clear that the developed algorithms are not optimal and leave a lot of space for improvement. We hope that, with further refinements, the theoretical results of this paper can be used to obtain the super-resolution effect in two-dimensional black-and-white images. Extending the theoretical results and algorithm performance to two-dimensional binary images is highly related to discrete tomography~\cite{herman_2012_1}. This would allow for future applications in image processing, nondestructive testing~\cite{krimmel_2005_1}, X-ray crystallography~\cite{alpers_2006_1}, and medical imaging~\cite{herman_2003_1}. One key challenge to overcome is to adapt our inversion algorithms to inversion of  two-dimensional DFT. While the $N=199$ inversion runs fast in one-dimension, further algorithm development is required for achieving fast super-resolved inversion of $199 \times 199$ images. Refinement of the cutting planes approach (applicable to both combinatorial and optimization algorithms) appears to be a promising way forward. 

\bibliography{abbrev,Master,book,local}

\begin{thebibliography}{10}
\providecommand{\url}[1]{#1}
\csname url@samestyle\endcsname
\providecommand{\newblock}{\relax}
\providecommand{\bibinfo}[2]{#2}
\providecommand{\BIBentrySTDinterwordspacing}{\spaceskip=0pt\relax}
\providecommand{\BIBentryALTinterwordstretchfactor}{4}
\providecommand{\BIBentryALTinterwordspacing}{\spaceskip=\fontdimen2\font plus
\BIBentryALTinterwordstretchfactor\fontdimen3\font minus
  \fontdimen4\font\relax}
\providecommand{\BIBforeignlanguage}[2]{{%
\expandafter\ifx\csname l@#1\endcsname\relax
\typeout{** WARNING: IEEEtran.bst: No hyphenation pattern has been}%
\typeout{** loaded for the language `#1'. Using the pattern for}%
\typeout{** the default language instead.}%
\else
\language=\csname l@#1\endcsname
\fi
#2}}
\providecommand{\BIBdecl}{\relax}
\BIBdecl

\bibitem{maznev_2017_1}
A.~A. Maznev and O.~B. Wright, ``Upholding the diffraction limit in the
  focusing of light and sound,'' \emph{Wave Motion}, vol.~68, p. 182–189,
  2017.

\bibitem{born_book_99}
M.~Born and E.~Wolf, \emph{{Principles of Optics}}.\hskip 1em plus 0.5em minus
  0.4em\relax Cambridge, 1999.

\bibitem{nasrollahi_2014_1}
K.~Nasrollahi and T.~B. Moeslund, ``Super-resolution: a comprehensive survey,''
  \emph{Machine Vision and Applications}, vol.~25, pp. 1423--1468, 2014.

\bibitem{romano_2017_1}
Y.~Romano, M.~Elad, and M.~Peyman, ``The little engine that could:
  {Regularization} by denoising {(RED)},'' \emph{SIAM J. Imag. Sci.}, vol.~10,
  p. 1804–1844, 2017.

\bibitem{watzenig_2007_1}
D.~Watzenig, ``Bayesian inference for inverse problems - statistical
  inversion,'' \emph{{e \& i Elektrotechnik und Informationstechnik}}, vol.
  124, pp. 240--247, 2007.

\bibitem{engl_2005_1}
H.~W. Engl and P.~Kugler, ``Nonlinear inverse problems: {Theoretical} aspects
  and some industrial applications,'' in \emph{Multidisciplinary Methods for
  Analysis Optimization and Control of Complex Systems Mathematics in
  Industry}.\hskip 1em plus 0.5em minus 0.4em\relax Springer, 2005, vol.~6, pp.
  3--47.

\bibitem{rajan_2001_1}
D.~Rajan and S.~Chaudhuri, ``Generalized interpolation and its application in
  super-resolution imaging,'' \emph{Image and Vision Computing}, vol.~19, pp.
  957--969, 2001.

\bibitem{jiji_2006_1}
C.~V. Jiji, P.~Neethu, and S.~Chaudhuri, ``Alias-free interpolation,'' in
  \emph{Compute Vision - ECCV 2006}, A.~Leonardis, H.~Bischof, and A.~Pinz,
  Eds.\hskip 1em plus 0.5em minus 0.4em\relax Springer, 2006, pp. 255--266.

\bibitem{lagendijk_1990_1}
R.~L. Lagendijk and J.~Biemond, \emph{Iterative Identification and Restoration
  of Images}.\hskip 1em plus 0.5em minus 0.4em\relax Springer, 1990.

\bibitem{liu_2014_1}
X.~Liu, D.~Zhai, D.~Zhao, G.~Zhai, and W.~Gao, ``Progressive image denoising
  through hybrid graph laplacian regularization: {A} unified framework,''
  \emph{IEEE Trans. Imag. Proc.}, vol.~23, pp. 1491--1503, 2014.

\bibitem{rudin_1992_1}
L.~I. Rudin, S.~Osher, and E.~Fatemi, ``Nonlinear total variation based noise
  removal algorithms,'' \emph{Physica D}, vol.~60, pp. 259--268, 1992.

\bibitem{beck_2009_1}
A.~Beck and M.~Teboulle, ``A fast iterative shrinkage-thresholding algorithm
  for linear inverse problems,'' \emph{SIAM J. Imag. Sci.}, vol.~2, p.
  183–202, 2009.

\bibitem{tropp_2007_1}
J.~A. Tropp and A.~C. Gilbert, ``Signal recovery from random measurements via
  orthogonal matching pursuit,'' \emph{IEEE Trans. Info. Theor.}, vol.~53, pp.
  4655--4666, 2007.

\bibitem{blumensath_2012_1}
T.~Blumensath, ``Accelerated iterative hard thresholding,'' \emph{Sign. Proc.},
  vol.~92, pp. 752--756, 2012.

\bibitem{blumensath_2013_1}
------, ``Compressed sensing with nonlinear oservations and related nonlinear
  optimization,'' \emph{IEEE Trans. Info. Theor.}, vol.~59, pp. 3466--3474,
  2013.

\bibitem{deutsch_2015_1}
B.~Deutsch, R.~Reddy, D.~Mayerich, H.~Bhagrava, and P.~S. Carney,
  ``Compositional prior information in computed infrared spectroscopic
  imaging,'' \emph{J. Opt. Soc. Am. A}, vol.~32, pp. 1126--1131, 2015.

\bibitem{pfitser_2000_1}
L.~Pfister, R.~Bhargava, Y.~Bresler, and P.~S. Carney, ``Composition-aware
  spectroscopic tomography,'' \emph{Inverse Problems}, vol.~36, p. 115010,
  2020.

\bibitem{liang_2007_1}
Z.~Liang, ``Spatiotemporal imaging with partially separable functions,'' in
  \emph{{2007 4th IEEE International Symposium on Biomedical Imaging: From Nano
  to Macro}}.\hskip 1em plus 0.5em minus 0.4em\relax IEEE, 2007, pp. 988--991.

\bibitem{corlu_2003_1}
A.~Corlu, J.~Durduran, R.~Choe, M.~Schweiger, E.~Hillman, S.~Arridge, and
  A.~Yodh, ``Uniqueness and wavelength optimization in continuous-wave
  multispectral diffuse optical tomography,'' \emph{Opt. Lett.}, vol.~28,
  no.~23, pp. 2339--2402, 2003.

\bibitem{corlu_2005_1}
------, ``Diffuse optical tomography with spectral constraints and wavelength
  optimization,'' \emph{Appl. Opt.}, vol.~44, no.~11, pp. 2082--2090, 2005.

\bibitem{zhang-ting_2016_1}
T.~Zhang, C.~Godavarthi, P.~C. Chaumet, G.~Maire, H.~Giovannini, A.~Talneau,
  M.~Allain, K.~Belkebir, and A.~Sentenac, ``Far-field diffraction microscopy
  at $\lambda/10$ resolution,'' \emph{Optica}, vol.~3, p. 609, 2016.

\bibitem{tropp_2008_1}
J.~A. Tropp, ``On the linear independence of spikes and sines,'' \emph{J.
  Fourier Anal. Appl.}, vol.~14, p. 838, 2008.

\bibitem{moitra_2015_1}
A.~Moitra, ``Super-resolution, extremal functions and the condition number of
  vandermonde matrices,'' in \emph{Proceedings of the Forty-Seventh Annual ACM
  Symposium on Theory of Computing}.\hskip 1em plus 0.5em minus 0.4em\relax New
  York, NY: Association for Computing Machinery, 2015, p. 821–830.

\bibitem{bailey_2012_1}
J.~Bailey, M.~A. Iwen, and C.~V. Spencer, ``On the design of deterministic
  matrices for fast recovery of {Fourier} compressible functions,'' \emph{SIAM
  J. Matrix Analys. Appl.}, vol.~33, pp. 263--289, 2012.

\bibitem{tao_2003_1}
T.~Tao, ``An uncertainty principle for cyclic groups of prime order,''
  \emph{Math. Res. Lett.}, vol.~12, pp. 121--127, 2005.

\bibitem{ryan_2009_1}
W.~Ryan and S.~Lin, \emph{Channel Codes: Classical and Modern}.\hskip 1em plus
  0.5em minus 0.4em\relax Cambridge Univ. Press, 2009.

\bibitem{oggier_2005_1}
F.~Oggier, ``{Algebraic Methods for Channel Coding},'' EPFL, Tech. Rep., 2005.

\bibitem{mann_1965_1}
H.~B. Mann, ``On linear relations between roots of unity,'' \emph{Mathematika},
  vol.~12, pp. 107--117, 1965.

\bibitem{conway_1976_1}
J.~Conway and A.~Jones, ``Trigonometric {Diophantine} equations ({On} vanishing
  sums of roots of unity),'' \emph{Acta Arithmetica}, vol.~30, pp. 229--240,
  1976.

\bibitem{lenstra_1978_1}
H.~W. Lenstra, ``Vanishing sums of roots of unity,'' in \emph{Proc.
  Bicentennial Congress Wiskundig Genootschap, Part II}.\hskip 1em plus 0.5em
  minus 0.4em\relax Vrije Univ. Amsterdam, 1978, pp. 249--268.

\bibitem{lam_1996_1}
T.~Y. Lam and K.~H. Leung, ``Vanishing sums of $m$th roots of unity in finite
  fields,'' \emph{Finite Fields and Their Applications}, vol.~2, pp. 422--438,
  1996.

\bibitem{lam_2000_1}
------, ``On vanishing sums of roots of unity,'' \emph{J. Algebra}, vol. 224,
  pp. 91--109, 2000.

\bibitem{aravkin_2018_1}
A.~Y. Aravkin, J.~V. Burke, D.~Drusvyatskiy, M.~P. Friedland, and S.~Roy,
  ``Level-set methods for convex optimization,'' \emph{Math. Prog.}, vol. 174,
  pp. 359--390, 2018.

\bibitem{martello_2000_1}
S.~Martello, D.~Pisinger, and P.~Toth, ``New trends in exact algorithms for the
  0--1 knapsack problem,'' \emph{Europ. J. Oper. Res.}, vol. 123, pp. 325--332,
  2000.

\bibitem{karp_1972_1}
R.~M. Karp, ``Reducibility among combinatorial problems,'' in \emph{Complexity
  of Computer Computations}.\hskip 1em plus 0.5em minus 0.4em\relax Springer,
  1972, pp. 85--103.

\bibitem{balas_2003_1}
E.~Balas and M.~Perregaard, ``A precise correspondence between lift-and-project
  cuts, simple disjunctive cuts, and mixed integer gomory cuts for 0-1
  programming,'' \emph{Mathematical Programming}, no.~2, pp. 221--245, 2003.

\bibitem{koiliaris_2019_1}
K.~Koiliaris and C.~Xu, ``Faster pseudopolynomial time algorithms for subset
  sum,'' \emph{ACM Trans. Alg. (TALG)}, vol.~15, pp. 1--20, 2019.

\bibitem{bringmann_2017_1}
K.~Bringmann, ``A near-linear pseudopolynomial time algorithm for subset sum,''
  in \emph{Proceedings of the Twenty-Eighth Annual ACM-SIAM Symposium on
  Discrete Algorithms}.\hskip 1em plus 0.5em minus 0.4em\relax SIAM, 2017, pp.
  1073--1084.

\bibitem{chan_2018_1}
T.~M. Chan, ``Approximation schemes for 0-1 knapsack,'' in \emph{1st Symposium
  on Simplicity in Algorithms (SOSA 2018)}.\hskip 1em plus 0.5em minus
  0.4em\relax Schloss Dagstuhl-Leibniz-Zentrum fuer Informatik, 2018.

\bibitem{boutsidis_2009_1}
C.~Boutsidis, M.~W. Mahoney, and P.~Drineas, ``An improved approximation
  algorithm for the column subset selection problem,'' in \emph{Proceedings of
  the Twentieth Annual ACM-SIAM Symposium on Discrete Algorithms}.\hskip 1em
  plus 0.5em minus 0.4em\relax SIAM, 2009, pp. 968--977.

\bibitem{howgrave_2010_1}
N.~Howgrave-Graham and A.~Joux, ``New generic algorithms for hard knapsacks,''
  in \emph{Annual International Conference on the Theory and Applications of
  Cryptographic Techniques}.\hskip 1em plus 0.5em minus 0.4em\relax Springer,
  2010, pp. 235--256.

\bibitem{ireland_book_1990}
K.~Ireland, M.~Rosen, and M.~Rosen, \emph{A Classical Introduction to Modern
  Number Theory}.\hskip 1em plus 0.5em minus 0.4em\relax Springer, 1990.

\bibitem{herman_2012_1}
G.~T. Herman and A.~Kuba, \emph{Discrete Tomography: Foundations, Algorithms,
  and Applications}.\hskip 1em plus 0.5em minus 0.4em\relax Springer, 2012.

\bibitem{krimmel_2005_1}
S.~Krimmel, J.~Baumann, Z.~Kiss, A.~Kuba, A.~Nagy, and J.~Stephan, ``Discrete
  tomography for reconstruction from limited view angles in non-destructive
  testing,'' \emph{Electronic Notes in Discrete Mathematics}, vol.~20, pp.
  455--474, 2005.

\bibitem{alpers_2006_1}
A.~Alpers, H.~F. Poulsen, E.~Knudsen, and G.~T. Herman, ``A discrete tomography
  algorithm for improving the quality of three-dimensional {X-ray} diffraction
  grain maps,'' \emph{J. Appl. Crystallography}, vol.~39, pp. 582--588, 2006.

\bibitem{herman_2003_1}
G.~T. Herman and A.~Kuba, ``Discrete tomography in medical imaging,''
  \emph{Proc. IEEE}, vol.~91, pp. 1612--1626, 2003.

\end{thebibliography}

\appendices

\section{Proof of Lemma~\ref{lm:1}}
\label{app:A}

Suppose ${\tt x}$ is not pairwise $1$-distinguishable from some distinct vector ${\tt y}$ in $\Omega(N,r)$.  By definition, we have $\tilde{x}_1=\tilde{y}_1$.  Following the same notation as in the proof of Theorem 1, we define ${\tt z}={\tt x}-{\tt y}$ and conclude that \eqref{z1_thm1} must hold, where each $z_n$ takes a value in $\{0,\pm 1\}$. As ${\tt x}\neq{\tt y}$, not all entries of $z_n$ can be $0$. Furthermore, as ${\tt x}$ and ${\tt y}$ are binary vectors with the same popcount, by construction $\tilde{z}_0=0$. 

We will show that ${\tt z}$  either has an equivalent number of $p$-gons and ``negative" $p$-gons (a polygon with entries of $-1$), or that  ${\tt z}$  has an equivalent number of $q$-gons and ``negative" $q$-gons. As each positive $p$- or $q$-gon in ${\tt z}$ corresponds to a $p$- or $q$-gon in ${\tt x}$ (and an empty $p$- or $q$-gon in ${\tt y}$), and each negative $p$- or $q$-gon in ${\tt z}$ corresponds to an empty $p$- or $q$-gon in ${\tt x}$ (and a $p$- or $q$-gon in ${\tt y}$), this will prove the forward direction.  

We consider two cases: $p=q$ and $p\neq q$.  First, consider the case $p\neq q$.  Letting $\zeta_p$ and $\zeta_q$ be primitive roots of unity of $p$-th and $q$-th order, respectively, we rewrite \eqref{z1_thm1} using Theorem 2.3 in~\cite{lenstra_1978_1} as 
\begin{align}
\label{z1_split_lemma1}
0= \sum_{\ell=1}^q\sum_{k=1}^p  z_{(k\ell)} \zeta_p^k \zeta_q^\ell \ .
\end{align}
Here we have introduced the composite index $(k\ell)$ from 1 to $pq$.  Note that as $\zeta_p^k\zeta_q^\ell=\zeta_N^{p\ell+qk}$, by fixing either a value of $\ell$ or $k$, we vary over roots of unity the make a regular $p$- or $q$-gon, respectively.    We can now apply Lemma 1 of~\cite{conway_1976_1} to conclude that  \eqref{z1_split_lemma1} holds if and only if the inner sum is constant for all $\ell$.  That is, for all $\ell$, 
\begin{align*}
\sum_{k=1}^pz_{(k\ell)}\zeta^k_p = \sum_{k=1}^p z_{(k1)}\zeta^k_p \ .
\end{align*}
Similar to the proof of Theorem 1, as $\zeta_p$ is a root of unity of a prime order, this equation can only hold if, for each $\ell$, there is a fixed constant difference between $z_{(k\ell)}$ and $z_{(k1)}$ for all $k$. Thus we have, for all $k$ and $\ell$,
\begin{align}
\label{eq:C_ell}
z_{(k\ell)}-z_{(k1)}= C_{\ell} \ ,
\end{align}
where each $C_{\ell}\in\{0,\pm1, \pm2\}$. Now, by summing over all $k$ and $\ell$, we obtain
\begin{align*}
\sum_{k=1}^p\sum_{\ell=1}^q \left(z_{(k\ell)}-z_{(k1)}\right) = & \sum_{k=1}^p\sum_{\ell=1}^q C_\ell \ , \\ 
 -q\sum_{k=1}^p z_{(k1)} = & p\sum_{\ell=1}^q C_\ell  \ ,
\end{align*}
where, in the second line, we have used the fact that $\tilde{z}_0=0$. As $p$ and $q$ are relatively prime, $\sum z_{(k1)}$ must be a multiple of $p$. As each $z_n$ is in $\{0,\pm1\}$, we must have $\sum z_{(k1)}$ in $\{0,\pm p\}$.  We break this into two cases.

If $\sum z_{(k1)}=\pm p$, then $z_{(k1)}$ is constant for all $k$ with a value of $1$ or -$1$.  By our indexing and Definition~\ref{df:4}, ${\tt z}$ contains either a $p$-gon or a negative $p$-gon.  By \eqref{eq:C_ell}, for $2\le\ell\leq q$, $z_{(k\ell)}$ must form either a $p$-gon, an empty $p$-gon, or a negative $p$-gon.   As $\tilde{z}_0=0$, there must be an equivalent number of positive and negative $p$-gons, as desired.  

If $\sum z_{(k1)}=0$, we first note that, if all the $z_{(k1)}=0$, then since not all entries of $z_n$ can be 0, there must be at least one nonzero value of $C_\ell$ in \eqref{eq:C_ell}.  This yields either a positive or negative $p$-gon.  Identical reasoning ($\tilde{z}_0=0$) leads to an equivalent number of positive and negative $p$-gons in this case.  

The remaining option is for $z_{(k1)}$ to have equal number of +1 and -1's. In this case, as $z_{(k\ell)}$ can only have entries in $\{0,\pm1\}$, $C_\ell$ must be equal to 0 in Eq.~\eqref{eq:C_ell} for all $\ell$.  By our indexing, values of $k$ for which $z_{(k1)}=\pm 1$ yield $q$-gons or ``negative" $q$-gons, respectively.  As we have equal number of +1 and -1 values, we have an equivalent number of positive and negative $q$-gons, finishing the proof in the $p\neq q$ case.  

For the $p=q$ case, we apply Theorem 2.2 in~\cite{lenstra_1978_1}, which states that the sum in \eqref{z1_thm1} vanishes if and only if, for all $1 \leq j \leq p$,
\begin{align*}
0=\sum_{n=1}^{p} z_{(jn)} \zeta_{p}^n \ ,
\end{align*}
where $(jn)$ is a composite index with $n$ referring to $p$-periodic locations in the original vector ${\tt z}$.  As in the proof of Theorem 1, this implies that for each $j$, either $z_{(jn)}$ is constant for all $n$ with a value of 0 or $\pm1$.  Similar to the previous case, as not all entries $z_n$ can be 0, there must be at least one value of $j$ for which $z_{(jn)}=\pm1$.  Applying the condition $\tilde{z}_0=0$ then results in equivalent number of positive and negative $p$-gons.

For the reverse direction, we note that, if ${\tt x}$ contains pairs of $p$- or $q$-gons and empty $p$- or $q$-gons, then ${\tt y}$, which flips the full and empty $p$- or $q$-gons and agrees at all other entries with ${\tt x}$, satisfies $\tilde{x}_0=\tilde{y}_0$ and $\tilde{x}_1=\tilde{y}_1$.

\section{Proof of Lemma~\ref{lm:2}}
\label{app:B}

The case $L=1$ is a direct consequence of Lemma~\ref{lm:1}. For $L>1$, Lemma~\ref{lm:1} states that, if ${\tt y}$ is not $1$-distinguishable from ${\tt x}$, then ${\tt z}={\tt x}-{\tt y}$ has a pair of positive and negative $L$-gons.  This may not be the only such pair of $L$-gons.  However, the proof of Lemma~\ref{lm:1} showed that ${\tt z}$ cannot also have any positive  and negative $L'$-gon pairs for $L'\neq L$.  We will rewrite the DFT coefficients of ${\tt z}$ by grouping the roots of unity which form each $L$-gon together. (with all other entries being 0).  Each $L$-gon uses the roots (for some $\ell$)
\begin{align}
\label{zeta_polygon}
\{\zeta_N^{\ell+kN/L}:1\leq k\leq L\} = \{\zeta_N^\ell\zeta_L^k: 1\leq k\leq L \} \ .
\end{align}
We thus write
\begin{align}
\tilde{z}_n= &\Big{(} \sum_{\ell\in\mathcal{S}^+} \zeta_N^\ell - \sum_{\ell\in\mathcal{S}^-} \zeta_N^\ell \Big{)}
\sum_{k=1}^{L}\zeta_{L}^{nk} \ ,
\label{zeta_sum}
\end{align}
where $\mathcal{S}^+$ and $\mathcal{S}^-$ are index sets that track the appropriate positive and negative coefficients for the polygons.   As the polygons come in pairs, $| \mathcal{S}^+| = |\mathcal{S}^-|$.
We claim that 
\begin{align}
\label{zeta_sum_cases}
\sum_{k=1}^{L}\zeta_{L}^{nk}=\begin{cases}
L   \ , & n=L \\
0   \ , & 1\leq n < L
\end{cases}
\end{align}
The case $n=L$ is straightforward. As $\zeta_{L}$ is an $L$-th root of unity, $\zeta_{L}^{kL}=1$ for all $k$. For $n<L$, we can analyze the sums using a group theory approach. 

As $\zeta_{L}$ is a generator of the cyclic group of order $L$, we have $\zeta_{L}^{nk} = \zeta_{L}^{nk'}$ if and only if $nk\equiv nk'$ (mod $L$) \cite{ireland_book_1990}. This is equivalent to 
\begin{align}
\label{mod_eq}
k\equiv k' \ \big{(} {\rm mod} \frac{L}{{\rm gcd}(L,n)} \big{)} \ .
\end{align}
As $L$ must be prime (it is either $p$ or $q$) and $n<L$, ${\rm gcd}(L,n)=1$.  As $1\leq k,k'\leq L$, \eqref{mod_eq} implies that $\zeta_L^{nk} \neq\zeta_{L}^{nk'}$ for all $k$ and $k'$.  Thus, \eqref{zeta_sum_cases} is just a permuted sum of all $L$-th roots of unity, which is still 0.  

With \eqref{zeta_sum_cases} now verified, we can apply this to \eqref{zeta_sum} as
\begin{align}
\label{zn_reduced}
\tilde{z}_ n= \begin{cases}L \Big{(} \sum_{\ell\in\mathcal{S}^+} \zeta_N^\ell -\sum_{\ell\in\mathcal{S}^-} \zeta_N^\ell \Big{)} \ ,  & n=L \\
0 \ , & 1\leq n<L
\end{cases}\ .
\end{align}
All that is left to do is to verify that the sum for $\tilde{z}_L$ in \eqref{zn_reduced} is nonzero. This is a sum of roots of unity of $N$th order with equal number of coefficients of $\pm1$.  
As in the proof of Lemma~\ref{lm:1}, this term vanishes if and only if $\zeta_N^\ell \in \mathcal{S}^+$ and the $\zeta_N^\ell \in \mathcal{S}^-$ are all used to form an equivalent number of $p$- and $q$-gons.

If $\zeta_N^\ell \in \mathcal{S}^+$, then $\zeta_N^{\ell+kN/L} \notin \mathcal{S}^+$ for all $k$ as these rotations would give the same $L$-gon.  Thus by the form given by \eqref{zeta_polygon}, $\mathcal{S}^+$ cannot have any $L$-gons.  Likewise, $\mathcal{S}^+$ cannot have any $L'$-gons, as this would require $|S^+|=L'$.  This implies  there are $L'$ many $L$-gons in ${\tt z}$.  As $L'$ can only be $p$ or $q$, $LL'=N$ implying that $\tilde{x}_0=N$ which is not true.  Identical reasoning applies to $S^-$, and hence $\tilde{z}_L\neq 0$.  Therefore, ${\tt x}$ is pairwise $L$-distinguishable from all other vectors in $\Omega(N,r)$.

\begin{IEEEbiography}[{\includegraphics[width=1in,height=1.25in,clip,keepaspectratio]{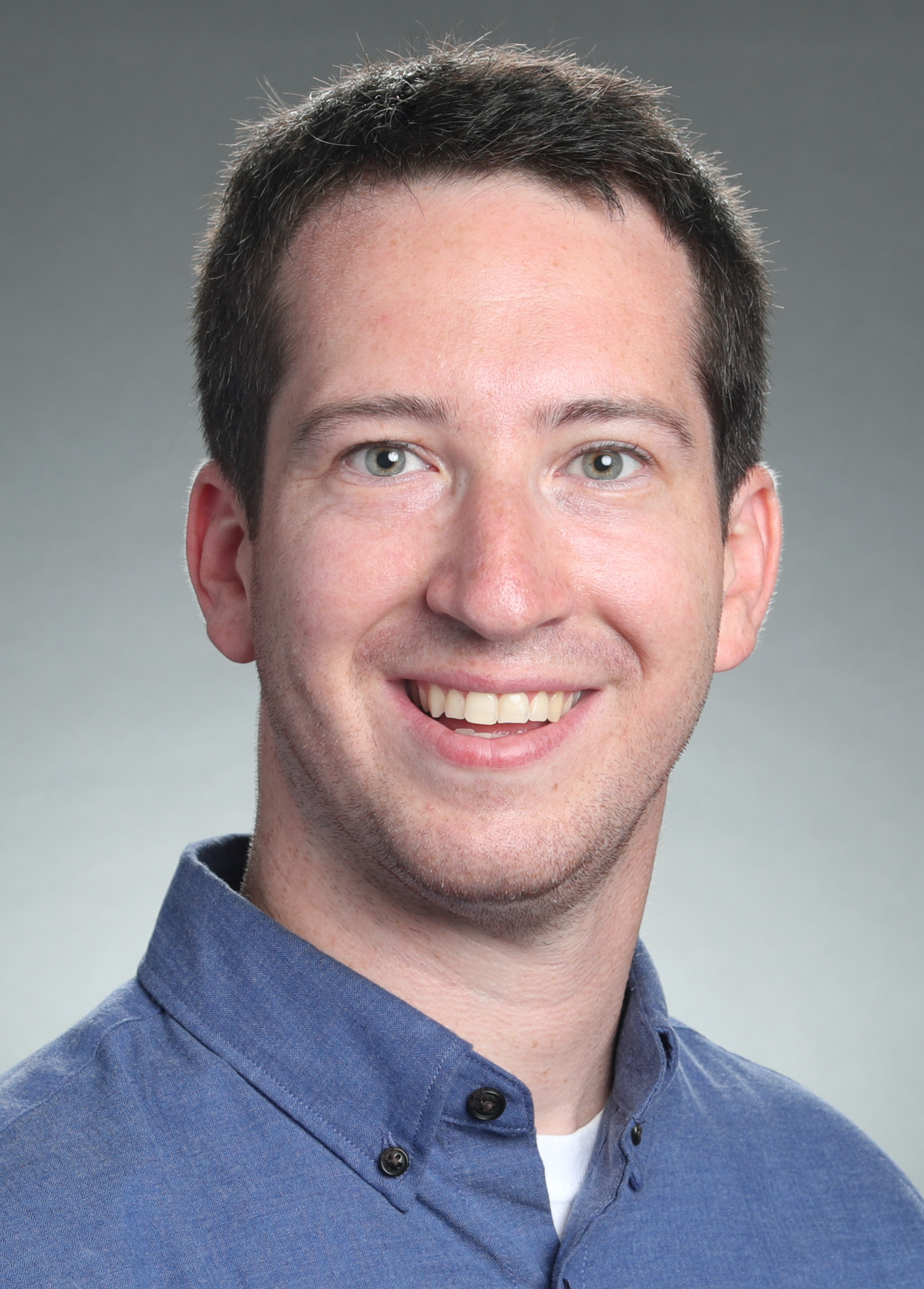}}]{Howard Levinson}
received the B.A. degree in mathematics from Tufts University in 2011, and the Ph.D. degree in Applied Mathematics and Computational Science from the University of Pennsylvania in 2016.  He was a James Van Loo Postdoctoral Assistant Professor at the University of Michigan from 2016 to 2019.  He is currently an Assistant Professor in the Department of Mathematics and Computer Science at Santa Clara University.   His current research interests include inverse problems, imaging, and fast algorithms.
\end{IEEEbiography}

\begin{IEEEbiography}[{\includegraphics[width=1in,height=1.25in,clip,keepaspectratio]{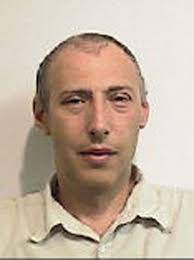}}]{Vadim Markel}
earned an undergraduate degree in Physics with specialization in Quantum Optics from Novosibirsk State University in 1987. From 1987 through 1993, he worked at the Institute of Automation and Electrometry of the Siberian Branch of the Russian Academy of Sciences conducting research in nonlinear and statistical optics. He received PhD in Physics from New Mexico State University in 1996 and continued with postdoctoral studies in quantum many-body theory at the University of Georgia (1998-1999) and in inverse problems and imaging at the Washington University–St. Louis (1999-2001). Dr. Markel has spent two years (2015-2017) as an A*MIDEX Excellence Chair at the University of Aix-Marseille and {\em Institut Fresnel} in France. He is currently an Associate Professor of Radiology and Bioengineering (secondary) and a member of Graduate Group in Applied and Computational Mathematics at the University of Pennsylvania. In he was selected as an Outstanding Referee in 2016 by the American Physical Society, and in 2018 by the Institute of Physics (UK).
\end{IEEEbiography}

\end{document}